\documentclass[11pt,reqno]{amsart}
\usepackage[margin=1.25in, footskip=.5in]{geometry}
\usepackage[T1]{fontenc}
\usepackage[american]{babel}
\usepackage{amsmath,amsfonts,amssymb,amsthm,bm,wasysym}
\usepackage{mathrsfs}
\usepackage{graphicx}
\usepackage{enumitem}
\usepackage{multirow}
\usepackage{caption,float}
\captionsetup{font=small}
\usepackage{subcaption}
\usepackage{tikz}
\usetikzlibrary{matrix,arrows,backgrounds,calc,chains,automata,positioning,patterns}
\usetikzlibrary{decorations,decorations.pathmorphing}
\usetikzlibrary{shapes.geometric,calc}
\usepackage[colorlinks,linkcolor=green!50!black,citecolor=blue!50!black,pagebackref,hypertexnames=false, breaklinks]{hyperref}

\setlength{\parindent}{0pt}
%
\allowdisplaybreaks
%
\newtheorem{theorem}{Theorem}
\newtheorem{proposition}[theorem]{Proposition}
\newtheorem{corollary}[theorem]{Corollary}
\newtheorem{lemma}[theorem]{Lemma}
\theoremstyle{definition}
\newtheorem{definition}{Definition}
\newtheorem*{def*}{Definition}

\newtheorem{remark}{Remark}

\newtheorem*{thm*}{Theorem}
\newtheorem*{lem*}{Lemma}
\newtheorem*{prop*}{Proposition}
\newtheorem*{rem*}{Remark}

%
\def\supp{\operatorname{supp}}

\def\SG{{\rm SG}}
\newcommand{\N}{\mathbb{N}}
\newcommand{\R}{\mathbb{R}}

\newcommand{\T}{\mathbb{T}}
\numberwithin{equation}{section}
\title[NLS on the Sierpinski gasket]{Iterative methods fail to solve NLS\\ below the Sobolev embedding threshold \\ on the Sierpinski gasket}
\date{\today}
\author{Patricia Alonso Ruiz}
\address{Institute of Mathematics, Friedrich Schiller University Jena, Germany}
\email{patricia.alonso.ruiz@uni-jena.de}
\author{Gigliola Staffilani}
\thanks{P.A.R. was partly funded by the NSF CAREER grant DMS-2140664. G.S. was funded in part by NSF DMS-2052651, 
DMS-2306378 and the Simons Foundation
Collaboration Grant on Wave Turbulence.}
\address{Department of Mathematics, MIT, USA}
\email{gigliola@math.mit.edu}

\begin{document}
\begin{abstract}
We show that the nonlinear Schr\"odinger equation on the Sierpinski gasket with a power nonlinearity of order $2k{+}1$ is \emph{not} locally well-posed for initial data \emph{just below} the regularity threshold for the Sobolev embedding $H^s\subseteq L^\infty$. More precisely, the flow map fails to be $C^{2k+1}$-continuous in any Sobolev space $H^s$ below that threshold, and the threshold is independent of the power nonlinearity. This novel behavior significantly differs from other compact spaces such as the torus or the sphere, and it is directly connected to the existence of localized eigenfunctions.
\end{abstract}
\maketitle

{\small
\textbf{2010 MSC:} 58J50; 28A80

\textbf{Keywords:} Nonlinear Schr\"odinger equation; ill-posedness; Strichartz estimates; Sierpinski gasket; localized eigenfunctions.
}

\tableofcontents
\newpage
\section{Introduction}
In this paper we investigate the well-posedness of the Cauchy problem associated to a nonlinear Schr\"odinger equation (NLS) on a compact domain of fractal type, namely the Sierpinski gasket (SG), see Figure~\ref{F:SG}. Before moving to the remarkable properties of this domain and their interactions with the analysis of the Cauchy problem, we first provide the definition of well-posedness adopted throughout the discussion.

\begin{definition}[Strong well-posedness]\label{D:well-p} We say that the Cauchy problem associated to an evolution equation is \emph{strongly well-posed} in the Sobolev space $H^s$ if, for any initial data $u_0\in H^s$, there exists a time $T>0$ depending only on $u_0$, and a subspace $X_T^s\subset C([-T,T], H^s)$, such that
\begin{enumerate}[wide=0em,label=(\roman*)]
    \item there exists a solution  $u(t,x)\in X_T^s$ with $u(0,\cdot)=u_0$,  
    \item the solution is unique in $X_T^s$, 
    \item the solution map that associates $u_0$ to $u(t,x)$ as a map from $H^s$ to $X_T^s$ is smooth in the topology of $C([-T,T];H^s)$.
\end{enumerate}
\end{definition}

In particular, we focus on the Cauchy problem for the NLS
\begin{equation}\label{E:NLS_intro}
        \begin{cases}
        i\partial_t u+\Delta u= \mu |u|^{2k}u&\quad (t,x)\in \mathbb{R}\times M,\\
        u(0,\cdot)=u_0\in H^s_x(M),&
        \end{cases}
    \end{equation}
where $k\in \N$, $\mu=\pm 1$, and $M$ may be the Euclidean space $\R^d$ or a compact space, as will be made clearer below. 
When $M=\R^d$, we recall that NLS enjoys the following scaling invariance: if $u(t,x)$ is a solution to \eqref{E:NLS_intro}, then 
\begin{equation}\label{E:scaling_NLS}
    u_\lambda(t,x):= \lambda^{-\frac{1}{k}}u(t/\lambda^2, x/\lambda),\quad \lambda>0,
 \end{equation}
also solves~\eqref{E:NLS_intro} with initial data $u_{0,\lambda}:=\lambda^{-\frac{1}{k}}u_0( x/\lambda)$. In addition, the homogenous Sobolev norm $\dot H^{s_c}$ is invariant under the scaling~\eqref{E:scaling_NLS} for
\begin{equation}\label{E:critical} 
  s_c:= \frac{d}{2} - \frac{1}{k},
\end{equation} 
see e.g.~\cite[Section 3.1]{Tao06}. A relevant observation here is that, in the case $M=\R^d$ or $M=\T^d$, 
\[
s_c< \sigma_\infty,
\] 
where $\sigma_\infty=\frac{d}{2}$ is the threshold for the Sobolev embedding $H^{s}\subseteq L^\infty$. As a consequence of the embedding, $H^{s}$ is an algebra for $s>\sigma_\infty$, and it is then standard to show via a contraction  method that~\eqref{E:NLS_intro} is strongly well-posed in $H^s$ with $s>\sigma_\infty$. 

\medskip

When working in low regularity regimes it is customary to say that the problem~\eqref{E:NLS_intro} has a solution $u(t,x)$ in a time interval $[0,T]$, if the latter satisfies the Duhamel's formula
\begin{equation}\label{E:Duhamel}
    u(t,x)=e^{it\Delta}u_0(x)+i\mu \int_0^t e^{i(t-\tau)\Delta}|u|^{2k}u(\tau,x)\,d\tau
\end{equation}
for all $t\in[0,T]$, $\mu=\pm 1$. 
Based on our definition of strong well-posedness, the Cauchy problem \eqref{E:NLS_intro}, and now~\eqref{E:Duhamel}, is generally considered \emph{ill-posed} as soon as any of the requirements (i)-(iii) from Definition~\ref{D:well-p} fail. In that regard, one may speak of a \emph{strong well-posedness threshold} for~\eqref{E:NLS_intro} as the number $s_w$ such that
\begin{equation*}
    \text{NLS}~\eqref{E:NLS_intro}\text{ is }
    \begin{cases}
     \text{strongly well-posed in } H^s&\text{for }s> s_w,\\
    \text{ ill-posed in } H^s&\text{for }s<s_w.
    \end{cases}
\end{equation*}
Investigating mild forms of ill-posedness can serve to find (at least a proxy for) $s_w$. For instance, one can analyze at which regularity the flow map $u_0(x)\mapsto u(t,x)$ fails to act smoothly on $H^s$, since in that case no iterative scheme for~\eqref{E:Duhamel} will be able to provide (at least) existence of solutions to~\eqref{E:NLS_intro}, c.f.~\cite[Section 6]{Bou97}. This kind of ill-posedness is sometimes referred to as $C^\gamma$-ill-posedness when the solution map fails to be $C^\gamma$, see e.g.~\cite[Section 3.8]{Tao06}.

\medskip

The Cauchy problem for the cubic NLS on the one-dimensional torus $\mathbb{T}$ was proved to be ill-posed for initial data $u_0\in H^s(\mathbb{T})$ and $s<0$ by Molinet in~\cite{Mol09}, where he showed that the flow-map is not continuous from $H^s(\T)$ into itself. Strong local well-posedness when $s\geq 0$ had been proved by Bourgain in~\cite{Bou93} using a fixed point argument, whence in this case $s_w=0$ for $k=1$. Noteworthy is that $s_w>s_c=-\frac{1}{2}$ in this case, c.f.~\eqref{E:critical}.
Ill-posedness of NLS on $\mathbb{T}$ with a power nonlinearity of order $2k+1$ for $u_0\in H^s(\mathbb{T})$ was addressed by Burq-G\'erard-Tzvetkov in~\cite{BGT02}, where they proved that the flow map fails to be uniformly continuous when $s<0$, c.f.~\cite[Remark 1.2]{BGT02}. In fact, Chirst-Colliander-Tao showed in~\cite[Theorem 1]{CCT03b} that, when $u_0\in H^s(\mathbb{T})$ and $s<0$, the solution map fails to be continuous from $H^s(\mathbb{T})$ to any $H^{\sigma}(\mathbb{T})$ with $\sigma<0$. On the other hand, using Strichartz estimates and a fixed point argument one can prove strong local well-posedness for $H^s(\mathbb{T})$ for $s>s_c= \frac{1}{2}-\frac{1}{k}$ and $k\geq 2$, see~\cite[Theorem 1]{Bou93}, and~\cite[Theorem 1.1]{KK24} for recent results at the critical regularity $s=s_c$ when $k>2$. 
 In particular, this shows that $s_w=0=s_c$ for $k=2$. Thus far, ill-posedness below $s_c$ seems to be an open question when $k>2$, and therefore we can only say $s_w\in [0,s_c]$.

\medskip

Moving to two-dimensional compact model spaces, local well-posedness fails for the cubic NLS on $\mathbb{T}^2$ with $u_0\in H^s$ for $s<0$, see~\cite[Section 11]{CCT03}. Thanks to Bourgain's work~\cite[Proposition 5.73]{Bou93}, see also~\cite[Theorem 40]{Bou99} and the recent contribution~\cite{HK24}, strong well-posedness holds for initial data in $H^s(\mathbb{T}^2)$ and $s>0$. Thus, $s_w=0$ for $k=1$, and $s_w=0=s_c$, as is the case for the quintic ($k=2$) NLS on the one-dimensional torus. Strong well-posedness for the cubic NLS with initial data in $L^2(\mathbb{T}^2)$ remains an open problem. When $k\geq 2$, local well-posedness for $s\geq s_c=\frac{1}{2}-\frac{1}{k}$ follows from~\cite[Theorem 1.1]{KK24} by propagation of smoothness. To the best of our knowledge, ill-posedness below $s_c$ when $k\geq 2$ has not been investigated, so that $s_w\in[0,s_c]$ in this case.

\medskip

The picture in the case of the two-dimensional sphere is different: Burq-G\'erard-Tzvetkov proved well-posedness of the cubic NLS in $H^s(\mathbb{S}^2)$ for $s>\frac{1}{4}$ in~\cite[Theorem 1]{BGT05} and ill-posedness for $s<\frac{1}{4}$ in~\cite[Theorem 2, Remark 1.3]{BGT02}. See also~\cite{Ban04} and the recent contribution~\cite{B+24} for more results in this direction. Hence, $s_w=\frac{1}{4}$ for $k=1$. When $k\geq 2$, the aforementioned authors showed well-posedness in $H^s(\mathbb{S}^2)$ for $s>1-\frac{1}{2k}>s_c$, see~\cite[Proposition 3.1]{BGT04}. To the best of our knowledge, ill-posedness below $1-\frac{1}{2k}$ when $k\geq 2$ has not been investigated yet. Therefore, $s_w\leq 1-\frac{1}{2k}$ in this case.

\medskip

Thus far, as remarked above, the literature seems to have left open the question of the failure of local well-posedness for NLS with a generic $2k+1$ power nonlinearity on $\mathbb{T}^2$ and $\mathbb{S}^2$. It is however expected that the strong well-posedness threshold $s_w$ depends on the order of the power nonlinearity. 
Thanks to Strichartz estimates and a fixed point argument in these models, we do know that $s_w$ always lies ``significantly below'' the regularity threshold $\sigma_\infty$ for which the Sobolev embedding $H^{s}\subseteq L^\infty$ holds with $s>\sigma_\infty$.

\medskip

The above discussion should highlight the new behavior observed when the underlying space $M$, while remaining compact, is fractal. In the present paper we analyze in detail the case of the Sierpinski gasket (SG), taken as prototype for being one of the most studied compact fractal sets, whose (Hausdorff) dimension precisely lies between that of $\mathbb{T}$, and $\mathbb{T}^2$ or $\mathbb{S}^2$. 

\medskip

The study of the standard Laplace operator associated with SG and its spectral properties has a long history, going back to the physicists Rammal and Tolouse~\cite{RT82,Ram84} and the subsequent work by Fukushima and Shima in~\cite{FS92}. Section~\ref{SS:SG_basics} briefly recalls the functional analytic tools required to rigorously formulate~\eqref{E:NLS_intro} in this setting, and we suggest the interested reader to delve into the topic through the books by Kigami~\cite{Kig01} and Strichartz~\cite{Str06}. 

\medskip

The {first} main result in this paper shows that NLS with a $2k+1$ power nonlinearity is $C^{2k+1}$-ill-posed \emph{for any} regularity $s$ \emph{below} the threshold $\sigma_\infty$ for the Sobolev embedding $H^s\subseteq L^\infty$, \emph{and for any} $k$ in the nonlinearity. 

\begin{theorem}\label{T:no_cont_below_Sobolev}
    Let $0<s<\sigma_\infty(\SG)$. For $\mu=\pm 1$, for every integer $k\geq 1$, the non-linear Schr\"odinger equation
    \begin{equation*}
        \begin{cases}
        i\partial_t u+\Delta_{\SG} u= \mu|u|^{2k}u&\quad (t,x)\in \mathbb{R}\times\SG,\\
        u(0,\cdot)=u_0\in H^s(\SG),&
        \end{cases}
    \end{equation*}
    is $C^{2k+1}$-ill-posed. {In particular, the flow-map cannot extend from $H^1(\SG)$ to $H^s(\SG)$ with $1<s<\sigma_\infty$ as a $C^{2k+1}$-flow-map near the origin.}
\end{theorem}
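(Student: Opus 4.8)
The plan is to exploit the distinctive spectral feature of the Sierpinski gasket announced in the abstract: the existence of Dirichlet eigenfunctions of $\Delta_{\SG}$ that are \emph{localized}, i.e.\ supported on arbitrarily small cells of $\SG$. Concretely, I would fix an eigenpair $(\varphi_n,\lambda_n)$ where $\varphi_n$ is a localized eigenfunction supported in a cell of size $\sim 2^{-m}$ (in the natural word-length scaling of $\SG$), normalized in $L^2$, and then examine how the $j$-fold multilinear term in the power-series expansion of the Duhamel map acts on the one-parameter family of data $u_0 = \delta\,\varphi_n$. Since $|\varphi_n|^{2k}\varphi_n$ is again supported in the same small cell and, crucially, is a \emph{finite linear combination of eigenfunctions whose eigenvalues are all close to} $\lambda_n$ (this is the key rigidity coming from localization and the spectral decimation structure of $\SG$), the time integral in the $(2k{+}1)$-linear term in Duhamel's expansion does not enjoy the oscillation/cancellation that makes iteration work on $\mathbb{T}$ or $\mathbb{S}^2$; instead it produces a term of size $\sim t\,\delta^{2k+1}\lambda_n^{s/2}\cdot(\text{a large power of }\lambda_n)$ in $H^s$, while the data has $H^s$-norm $\sim \delta\,\lambda_n^{s/2}$. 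The mismatch, growing without bound as $n\to\infty$ precisely when $s<\sigma_\infty(\SG)$, is what will contradict $C^{2k+1}$-continuity of the flow map.

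First I would recall (from Section~\ref{SS:SG_basics}, or cite the standard construction of Barlow--Kigami localized eigenfunctions on $\SG$) the explicit family of localized eigenfunctions, together with the estimates relating $\|\varphi_n\|_{H^s}$, $\|\varphi_n\|_{L^\infty}$, and $\|\varphi_n\|_{L^p}$ to $\lambda_n$; the number $\sigma_\infty(\SG)$ is exactly the exponent at which $\|\varphi_n\|_{L^\infty}/\|\varphi_n\|_{H^s}$ stops being bounded, so that for $s<\sigma_\infty(\SG)$ one has $\|\varphi_n\|_{L^\infty}\gg \|\varphi_n\|_{H^s}$ with a quantitative gain $\lambda_n^{(\sigma_\infty-s)/2}$ (up to constants). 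Next I would write the formal Taylor expansion in $\delta$ of the would-be solution with data $\delta\varphi_n$,
\[
u(t) \;=\; \delta\, e^{it\Delta}\varphi_n \;+\; \sum_{j\ge 1} \delta^{(2k+1)j}\, A_j[\varphi_n](t),
\]
where $A_1[\varphi_n](t) = i\mu\int_0^t e^{i(t-\tau)\Delta}\,|e^{i\tau\Delta}\varphi_n|^{2k}e^{i\tau\Delta}\varphi_n\,d\tau$. The definition of $C^{2k+1}$-ill-posedness reduces to showing that $\|A_1[\varphi_n](t)\|_{H^s}$ cannot be bounded by $C\|\varphi_n\|_{H^s}^{2k+1}$ for $t$ small, uniformly in $n$. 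Then I would estimate $A_1[\varphi_n](t)$ from below: because $\varphi_n$ is an eigenfunction, $e^{i\tau\Delta}\varphi_n = e^{-i\lambda_n\tau}\varphi_n$, so the nonlinearity becomes simply $|\varphi_n|^{2k}\varphi_n$ (the phases cancel in $|\,\cdot\,|^{2k}$!), and $A_1[\varphi_n](t)= i\mu\int_0^t e^{i(t-\tau)\Delta}\big(e^{-i\lambda_n\tau}|\varphi_n|^{2k}\varphi_n\big)d\tau$. Expanding $|\varphi_n|^{2k}\varphi_n$ in the eigenbasis, every contributing eigenfunction is localized in the same small cell, hence has eigenvalue comparable to $\lambda_n$, so the oscillatory factor $e^{i(t-\tau)(\lambda_n - \lambda_m)}$ stays $\approx 1$ for $|t|\lesssim \lambda_n^{-1}$-type times — but taking $t$ \emph{fixed} and small, the integral is still bounded below by $c\,t\,\| |\varphi_n|^{2k}\varphi_n\|_{H^s}$ minus controlled error, and $\| |\varphi_n|^{2k}\varphi_n\|_{H^s}\gtrsim \|\varphi_n\|_{L^\infty}^{2k}\|\varphi_n\|_{H^s}$, which beats $\|\varphi_n\|_{H^s}^{2k+1}$ by the factor $(\|\varphi_n\|_{L^\infty}/\|\varphi_n\|_{H^s})^{2k}\gtrsim \lambda_n^{k(\sigma_\infty-s)}\to\infty$. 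Letting $n\to\infty$ forces the $C^{2k+1}$ bound to fail, proving the theorem; the final sentence about $H^1\to H^s$ follows since $\varphi_n\in H^1$ with the same computation.

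The main obstacle I anticipate is making the lower bound on the oscillatory integral $A_1[\varphi_n](t)$ rigorous without losing the gain: one must ensure that the spectral spread of $|\varphi_n|^{2k}\varphi_n$ around $\lambda_n$ is genuinely narrow (this is where the precise structure of localized eigenfunctions and the spectral decimation on $\SG$, as recalled in Section~\ref{SS:SG_basics}, is indispensable), and that the remaining error terms in the Taylor expansion — the higher $A_j$, and the difference between the formal and actual solution — do not reintroduce a comparable size; this is handled by choosing $\delta$ small depending on $n$ (say $\delta = \|\varphi_n\|_{L^\infty}^{-1}$, which is legitimate since $C^{2k+1}$-continuity is a statement near the origin) and tracking the homogeneity in $\delta$ so that the $A_1$ term dominates. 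A secondary technical point is verifying that $\sigma_\infty(\SG)$ as defined via the Sobolev embedding coincides with the exponent governing $L^\infty$-concentration of localized eigenfunctions; this should be either standard or already established in the preliminaries, and it is exactly the mechanism that makes the threshold independent of $k$.
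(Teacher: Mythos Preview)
Your overall strategy matches the paper's: take a localized eigenfunction $\psi_n$, look at the $(2k{+}1)$-st Taylor coefficient $A_1[\psi_n](t)$ of the Duhamel map, and show its $H^s$ norm blows up relative to $\|\psi_n\|_{H^s}^{2k+1}$ when $s<\sigma_\infty$. The reduction $e^{i\tau\Delta}\psi_n=e^{-i\lambda_n\tau}\psi_n$ and the final exponent $\lambda_n^{k(\sigma_\infty-s)}$ are both correct.

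The gap is in your lower bound on the oscillatory integral. You assert that ``every contributing eigenfunction is localized in the same small cell, hence has eigenvalue comparable to $\lambda_n$'', and from this you want the integral to be bounded below by $c\,t\,\|\,|\psi_n|^{2k}\psi_n\|_{H^s}$. The premise is false: any cell of $\SG$ supports localized eigenfunctions of \emph{every} sufficiently large eigenvalue, so the eigenfunction expansion of $\psi_n^{2k+1}$ has components at arbitrarily high frequencies, not just at frequencies $\simeq\lambda_n$. The spectral spread is therefore not narrow, the factor $e^{i\tau(\lambda_n-\lambda_\varphi)}$ genuinely oscillates, and you would have to control the high-frequency tail of $\psi_n^{2k+1}$ in $H^s$ --- which is neither obvious nor needed.

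The paper's remedy is much simpler and you should adopt it: do not try to recover the full $H^s$ norm of $\psi_n^{2k+1}$. Instead, drop every mode in the expansion except $\varphi=\psi_n$ itself. For that single mode $\lambda_\varphi=\lambda_{\psi_n}$ exactly, so the time integral is \emph{exactly} $t\,e^{it\lambda_{\psi_n}}$ with no oscillation, and the Fourier coefficient is
\[
\langle \psi_n^{2k+1},\psi_n\rangle \;=\; \|\psi_n\|_{L^{2k+2}}^{2k+2}.
\]
By the Sobolev-saturation property of localized eigenfunctions (Theorem~\ref{T:Sobolev_saturates}), $\|\psi_n\|_{L^{2k+2}}\simeq \lambda_{\psi_n}^{\sigma_{2k+2}/2}$, so this one retained term already gives
\[
\|A_1[\psi_n](t)\|_{H^s}\;\gtrsim\; t\,(1+\lambda_{\psi_n})^{s/2}\,\lambda_{\psi_n}^{k\,d_S/2}
\;\simeq\; t\,\|\psi_n\|_{H^s}^{2k+1}\,\lambda_{\psi_n}^{k(\sigma_\infty-s)},
\]
which is exactly the blow-up you need. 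No narrow-spread claim, no error control, and no $L^\infty$ estimate are required. (Your discussion of choosing $\delta$ depending on $n$ and bounding the higher $A_j$ is also unnecessary: the failure of the bound~\eqref{E:derivative_unif_bound} for $m=2k{+}1$ at $\gamma=0$ is itself the content of $C^{2k+1}$-ill-posedness.)
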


\begin{remark}\label{R:strong_LWP}
    In view of the Sobolev embedding, see Theorem~\ref{T:Sobolev_embedding} below, a standard fixed-point argument leads to strong local well-posedness in $H^s(\SG)$ for {$s=1>\sigma_\infty$ since $H^1(\SG)$ is an algebra by virtue of the general abstract estimate~\cite[(3.2.28)]{FOT11}. Note however that, in contrast to more classical models, the space $H^s(\SG)$ may \emph{not} be an algebra for $s>\sigma_\infty$: Currently it is known that $H^2(\SG)$ fails to be an algebra, see e.g.~\cite{BST99}, while the general picture is much less clear and its investigation subject of future research.}
\end{remark}

{A} novelty arising from Theorem \ref{T:no_cont_below_Sobolev} is that, on SG, the threshold $s_w=\sigma_\infty$ is relatively ``high'' in comparison to the known/expected values in the model spaces $\mathbb{T}$, $\mathbb{T}^2$, $\mathbb{S}^2$, where Strichartz estimates make it possible to prove well-posedness below the  $\sigma_\infty$ threshold.
On the other hand the phenomenon of ill-posedness below the Sobolev embedding threshold has been also observed in the cubic Szeg\H{o} equation on sub-Riemannian manifolds in~\cite{GG10}. 

{Another interesting} observation is that for SG the threshold $s_w$ is always \emph{independent} of the $2k+1$ power nonlinearity in NLS. As explained in detail in Subsection~\ref{SS:Eigenfunctions}, these new phenomena seem to be tightly related to the existence of so-called localized eigenfunctions of the Laplacian on SG. 

\medskip

For the convenience of the reader, the results mentioned in the introduction and the new result in this paper are summarized in Table~\ref{Table:ill_posed_thresholds}.

\begin{table}[H]
    \centering
    \setlength{\tabcolsep}{.5em}
    \renewcommand{\arraystretch}{2}
    \begin{tabular}{c|c|c|c|c}
    $|u|^{2k}u$ & $\mathbb{T}$  & $\SG$ & $\mathbb{T}^2$ & $\mathbb{S}^2$\\ \hline
    $k=1$ & $0=s_w{\color{red}<}\,\sigma_\infty=\frac{1}{2}$  & $s_w\,{\color{red}=}\,\sigma_\infty\,{=\frac{\log 3}{\log 5}}$ &  $0=s_w{\color{red}<}\,\sigma_\infty=1$& $\frac{1}{4}=s_w{\color{red}<}\,\sigma_\infty=1$\\
    $k= 2$ & $0=s_w\,{\color{red}<}\,\sigma_\infty=\frac{1}{2}$  & $s_w\,{\color{red}=}\,\sigma_\infty\,{=\frac{\log 3}{\log 5}}$ &  $0=s_w\,{\color{red}< }\,\sigma_\infty=1$ & $?=s_w\,{\color{red}{<}}\,\sigma_\infty=1$\\
    $k\geq 3$ & $?=s_w\,{\color{red}<}\,\sigma_\infty=\frac{1}{2}$  & $s_w\,{\color{red}=}\,\sigma_\infty\,{=\frac{\log 3}{\log 5}}$ &  $?=s_w\,{\color{red}{<}}\sigma_\infty=1$ & $?=s_w\,{\color{red}{<}}\sigma_\infty=1$\\
\end{tabular}
\caption{Summary of results on compact models ``between'' the Sierpinski gasket.}
\label{Table:ill_posed_thresholds}
\end{table}

{
\begin{remark}
    A stronger notion of ill-posedness is non-uniform continuity of the flow map, such as for example the result proved for $\mathbb{S}^2$ in~\cite{BGT02}. The approach there profited from the possibility to write an explicit solution to~\ref{E:NLS_intro}. Finding an explicit (or closed) expression for the localized eigenfunctions on the Sierpinski gasket involved in our analysis, c.f. Proposition~\ref{P:existence_localized}, would open the possibility to apply this approach. We conjecture that also in this case the flow map will exhibit a non-uniform continuity.
\end{remark}
}

{The second main result in this work  concerns the failure in SG of  Strichartz estimates  below the trivial threshold given by the Sobolev embedding.  If $M$ is a compact manifold of dimension $d\leq 2$, for the case of the cubic NLS, that is $k=1$ in~\eqref{E:NLS_intro},  to address well-posedness one uses the estimate 
\begin{equation}\label{Stri}
\|S_\tau u\|_{L^4_tL^4_x([0,T]\times M)}\apprle \|u\|_{H^s(M)},
\end{equation}
that  is always proved for   $s<\frac{d}{4}$, see  \cite{BGT05}, where $ \frac{d}{4}$ is the regularity needed to control  the $L^4$ norm in $M$ via Sobolev embeddijng. In Subsection  \ref{SS:No_Strichartz_cubic} we show that in SG   estimate \eqref{Stri} cannot hold for $s<\frac{d}{4}$. More precisely we have the following theorem.
\begin{theorem}\label{T:no_Strichartz_cubic}
For any $T>0$ there exists $\psi\in H^{d_S/4}$ such that
    \begin{equation}\label{E:no_Strichartz_cubic}
        \|S_t\psi\|_{L^4_tL^4_x([0,T]\times\SG)}\simeq_{d_S}T\|\psi\|_{H^{d_S/4}}.
    \end{equation}
\end{theorem}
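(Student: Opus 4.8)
The plan is to build the test function $\psi$ out of the localized eigenfunctions of $\Delta_{\SG}$ whose existence is asserted in Proposition~\ref{P:existence_localized}. The key structural fact about the Sierpinski gasket is that it supports eigenfunctions $\varphi_j$ that are supported on small cells, with eigenvalue $\lambda_j$, and whose number grows so fast with $\lambda_j$ that the eigenvalue counting function has exponent $d_S/2$ (the spectral dimension), rather than being governed by an ordinary Weyl law with the Hausdorff dimension. Since these eigenfunctions live on disjoint (or nearly disjoint) cells, they are essentially orthogonal not only in $L^2$ but also after taking powers, so the $L^4$ norm in $x$ of a sum essentially decouples into a sum of the $L^4$ norms of the pieces.

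Concretely, first I would fix a single localized eigenfunction $\varphi$ supported on a cell of size $5^{-m}$ (or whatever the correct contraction ratio is for eigenfunction localization on $\SG$), with eigenvalue $\lambda \simeq 5^{m}$. Then I would take $\psi = \sum_{i} c_i \varphi_i$ where the $\varphi_i$ are the $\simeq 3^{m}$ translates of $\varphi$ to congruent disjoint cells at the same scale, all sharing the same eigenvalue $\lambda$, with coefficients $c_i$ chosen so that $\|\psi\|_{H^{d_S/4}} \simeq 1$. Because all pieces share the eigenvalue $\lambda$, the Schrödinger evolution acts on each by the same phase $e^{-it\lambda}$, so $S_t\psi = e^{-it\lambda}\psi$ is \emph{time-independent up to a phase}; hence $\|S_t\psi\|_{L^4_x} = \|\psi\|_{L^4_x}$ for every $t$, and the $L^4_t L^4_x$ norm over $[0,T]$ is exactly $T^{1/4}\|\psi\|_{L^4_x}$. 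Wait — the statement has a factor $T$, not $T^{1/4}$, so I would instead pack the eigenfunctions across \emph{many} distinct eigenvalues arranged so that near $t=0$ the Duhamel/linear sum is coherent and spreads out; the mechanism is that a superposition of $N$ localized modes with comparable eigenvalues behaves, near $t=0$, like a sharply concentrated bump that then disperses, and one extracts a lower bound $\gtrsim T$ from integrating over the full interval once $T$ is taken on the relevant dyadic scale. I would compute $\|\psi\|_{H^{d_S/4}}^2 \simeq \sum_i |c_i|^2 \lambda^{d_S/2}\|\varphi_i\|_{L^2}^2$ and $\|\psi\|_{L^4_x}^4 \simeq \sum_i |c_i|^4 \|\varphi_i\|_{L^4}^4$ using disjoint supports, and then optimize the ratio, using the scaling relations $\|\varphi_i\|_{L^p} \simeq (\text{cell measure})^{1/p}$ together with the count $N \simeq \lambda^{d_S/2}$ of available cells at that scale.

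The heart of the estimate is thus a counting/scaling computation: one needs the number of disjoint localized eigenfunctions at eigenvalue $\simeq \lambda$ to be comparable to a positive power of $\lambda$ matching $d_S/2$, which should follow from the multiplicity structure of the spectrum of $\Delta_{\SG}$ described in Subsection~\ref{SS:Eigenfunctions}; and one needs the measure of a scale-$5^{-m}$ cell to be $3^{-m}$ (self-similar measure with equal weights), so that $\|\varphi_i\|_{L^p}$ scales as a known power of $\lambda$. Plugging these in, the ratio $\|S_t\psi\|_{L^4_tL^4_x}/\|\psi\|_{H^{d_S/4}}$ should come out $\simeq T$ (with implied constants depending only on $d_S$), precisely because the exponent $d_S/4$ is the borderline exponent where the $L^4$ Sobolev embedding degenerates on $\SG$: at that regularity the $H^{d_S/4}$ norm "sees" all $N$ modes democratically, exactly as the $L^4_x$ norm does, so no gain over the trivial Sobolev bound is possible and the inequality~\eqref{Stri} is saturated rather than improved.

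The main obstacle I anticipate is making the near-orthogonality of the \emph{fourth powers} rigorous when the localized eigenfunctions have overlapping (rather than strictly disjoint) supports — on $\SG$ the localized eigenfunctions are supported on cells but may share boundary vertices, and at those junction points cross terms in $|\psi|^4 = |\sum c_i\varphi_i|^4$ could in principle contribute. I would handle this either by noting the boundary is a finite set of points of measure zero (so it does not affect $L^4$ norms), or by choosing the $\varphi_i$ among the eigenfunctions that vanish on the cell boundary, which is exactly the defining feature of the genuinely localized Dirichlet-type eigenfunctions on $\SG$. A secondary technical point is tracking the precise value of the eigenvalue scaling exponent (the factor relating $\lambda$ to the cell scale), which enters through the spectral decimation recursion; I would quote this from the cited references rather than rederiving it, so that the final arithmetic cleanly produces the exponent $d_S/4$ and the linear-in-$T$ growth.
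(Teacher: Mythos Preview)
Your first instinct is the paper's proof. Take a single $\psi\in E_{\rm loc}$; then $S_t\psi=e^{-it\lambda_\psi}\psi$, so $\|S_t\psi\|_{L^4_tL^4_x([0,T]\times\SG)}^4=T\|\psi\|_{L^4}^4$, and Theorem~\ref{T:Sobolev_saturates} with $q=4$ gives $\|\psi\|_{L^4}\simeq_{d_S}\|\psi\|_{H^{d_S/4}}$. That is the entire argument. No summation over cells or eigenvalues is needed: a single localized eigenfunction already saturates the $H^{d_S/4}\hookrightarrow L^4$ embedding, which is exactly the content of Theorem~\ref{T:Sobolev_saturates} (and is the scaling computation you sketch, applied to one function).

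The detour you take after ``Wait'' is driven by a misreading of the power of $T$. The paper's own computation ends with $\|S_t\psi\|_{L^4_tL^4_x}^4=T\|\psi\|_{L^4}^4$, i.e.\ $T^{1/4}$ on the unraised norm, exactly as you first found; the displayed factor $T$ in~\eqref{E:no_Strichartz_cubic} should be read as $T^{1/4}$. Your plan to manufacture genuine linear-in-$T$ growth by superposing many distinct eigenvalues cannot succeed in any case: for every fixed $\psi$ one has the trivial bound $\|S_t\psi\|_{L^4_tL^4_x([0,T])}\le T^{1/4}\sup_t\|S_t\psi\|_{L^4_x}$, and for a finite combination of eigenfunctions the supremum is a constant independent of $T$. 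So the multi-eigenvalue packing, the disjoint-support bookkeeping for fourth powers, and the optimization over the $c_i$ are all unnecessary; drop them and revert to the one-line argument with a single $\psi\in E_{\rm loc}$.
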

\begin{remark}
It is possible to obtain a similar result as Theorem \ref{T:no_Strichartz_cubic} for $\|S_t\psi\|_{L^{2k+2}_tL^{2k+2}_x}$ and an appropriate Sobolev norm on the right-hand side. For conciseness, the details are left to the interested reader. 
It would also be interesting to investigate local well-posedness for~\eqref{E:NLS_intro} with $k=1$ in $H^s$ with $\frac{d_S}{4}<s<\sigma_\infty$.
\end{remark}
}

\smallskip
The paper is organized as follows: Section~\ref{S:Background} provides a brief review of the Sierpinski gasket, its standard Laplacian, and the functional spaces required to rigorously state and analyze NLS. An essential role is played by certain eigenfunctions of the Laplacian described in Section~\ref{S:Sobolev_embedding} along with the properties that render them key to prove the main result, which is proved in Section~\ref{S:main_result}.

\section*{Acknowledgments}
The authors are greatly thankful to K. Okjoudou and M. Taylor for inspiring discussions that originated this project, for the hospitality of ITS-ETH, where part of this work was conducted, and to J. L\"uhrmann for comments on earlier versions of the manuscript. {They also leave a special note of thanks to the anonymous referee, whose report significantly contributed to improving the original paper and drawn their attention to the  question of the validity of Strichartz estimates below the regularity dictated by the Sobolev embedding.}

\section{Background and preliminaries}\label{S:Background}
\subsection{The Sierpinski gasket}\label{SS:SG_basics}
The standard Sierpinski gasket ($\SG$), see Figure~\ref{F:SG}, is the unique compact subset of $\mathbb{R}^2$ that satisfies the equation
\begin{equation}\label{E:SG_fixed_point}
    \SG=F_0(\SG)\cup F_1(\SG)\cup F_2(\SG),
\end{equation} 
where $F_i\colon\mathbb{R}^2\to\mathbb{R}^2$, $i=0,1,2$, are the mappings
\begin{equation*}
    F_i(p):=\frac{1}{2}(p-p_i)+p_i
\end{equation*}
and $V_0:=\{p_0,p_1,p_2\}$ denotes the set of vertices of an equilateral triangle of side length one, c.f. Figure~\ref{F:SG_approx}. The set $V_0$ is also regarded as the natural boundary of $\SG$. The self-similarity property~\eqref{E:SG_fixed_point} has its roots in the \emph{Banach fixed point theorem}, c.f.~\cite[Section 3.1]{Hut81}.
As a metric measure space, $\SG$ is usually equipped with the Euclidean metric and the standard self-similar Bernoulli measure $\mu$ that gives the same weight to each triangular cell of the same side-length. This measure thus satisfies
\begin{equation}\label{E:def_measure_SG}
    \mu(F_{w}(V_0))=\frac{1}{3^m}
\end{equation}
for any $m\geq 0$ and $F_{w}:=F_{w_1}{\circ}\cdots \circ F_{w_m}$ for $w=w_1\ldots w_m\in\{0,1,2\}^m$. The latter is comparable to the
normalized $d_H$-dimensional Hausdorff measure, where $d_H=\frac{\log 3}{\log 2}$ is the (Euclidean) Hausdorff dimension of $\SG$.

\begin{figure}[H]
    \includegraphics[scale=.15]{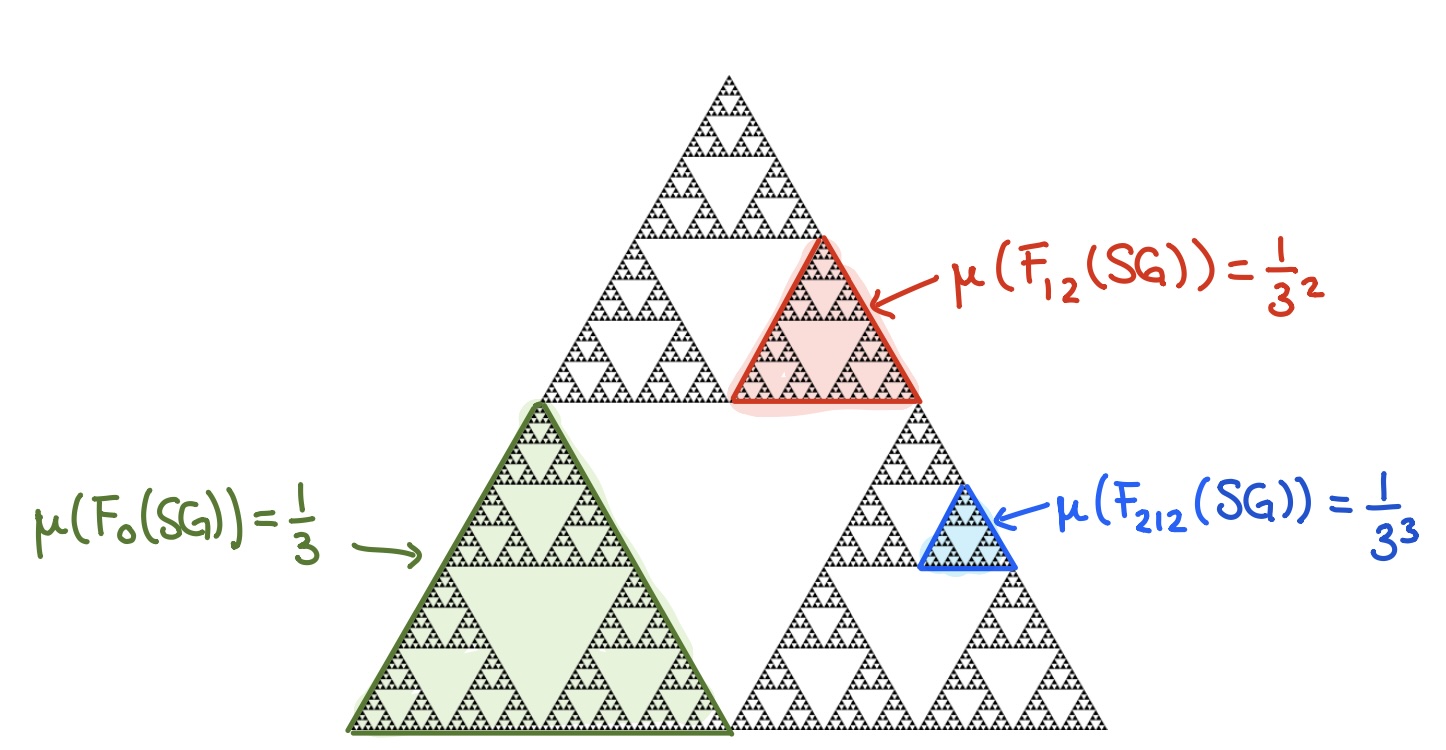}
    \caption{The standard Sierpinski gasket and the measure of different cells.}
    \label{F:SG}
\end{figure}

A common way to construct an intrinsic Laplace operator on $\SG$ to act as the usual second derivative in Euclidean space is through finite graph approximations. We review here the basics and refer the interested reader to the books~\cite{Kig01,Str06} for further details.
For each $m\geq 0$, let $V_m$ denote the vertex set of the finite $m$-level approximation of the Sierpinski gasket, see Figure~\ref{F:SG_approx}. 
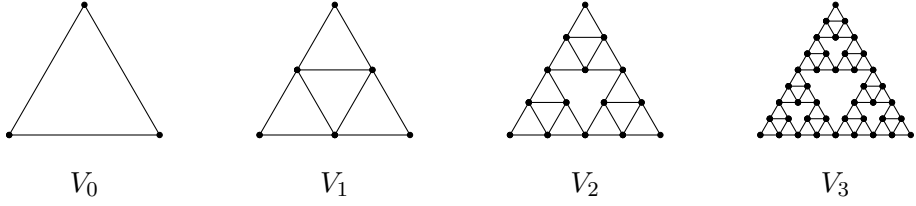
\begin{figure}[H]
\begin{center}
\renewcommand{\arraystretch}{0.5}
\begin{tabular}{cccc}
\begin{tikzpicture}
\tikzstyle{every node}=[draw,circle,fill=black,minimum size=2pt, inner sep=0pt]
\draw ($(0:0)$) node () {} --++ ($(0:2)$) node () {} --++ ($(120:2)$) node () {} --++ ($(240:2)$) node () {};
\end{tikzpicture}
\hspace*{2em}&
\begin{tikzpicture}
\tikzstyle{every node}=[draw,circle,fill=black,minimum size=2pt, inner sep=0pt]
\draw ($(0:0)$) node () {} --++ ($(0:2/2)$) node () {} --++ ($(120:2/2)$) node () {} --++ ($(240:2/2)$) node () {};
\draw ($(0:2/2)$) node () {} --++ ($(0:2/2)$) node () {} --++ ($(120:2/2)$) node () {} --++ ($(240:2/2)$) node () {};
\draw ($(60:2/2)$) node () {} --++ ($(0:2/2)$) node () {} --++ ($(120:2/2)$) node () {} --++ ($(240:2/2)$) node () {};
\end{tikzpicture}
\hspace*{2em}&
\begin{tikzpicture}
\tikzstyle{every node}=[draw,circle,fill=black,minimum size=2pt, inner sep=0pt]
\draw ($(0:0)$) node () {} --++ ($(0:2/4)$) node () {} --++ ($(120:2/4)$) node () {} --++ ($(240:2/4)$) node () {};
\draw ($(0:2/4)$) node () {} --++ ($(0:2/4)$) node () {} --++ ($(120:2/4)$) node () {} --++ ($(240:2/4)$) node () {};
\draw ($(60:2/4)$) node () {} --++ ($(0:2/4)$) node () {} --++ ($(120:2/4)$) node () {} --++ ($(240:2/4)$) node () {};
\foreach \a in {0,60}{
\draw ($(\a:2/2)$) node () {} --++ ($(0:2/4)$) node () {} --++ ($(120:2/4)$) node () {} --++ ($(240:2/4)$) node () {};
\foreach \b in{0,60}{
\draw ($(\a:2/2)+(\b:2/4)$)node () {} --++ ($(0:2/4)$) node () {} --++ ($(120:2/4)$) node () {} --++ ($(240:2/4)$) node () {};
}
}
\end{tikzpicture}
\hspace*{2em}&
\begin{tikzpicture}
\tikzstyle{every node}=[draw,circle,fill=black,minimum size=2pt, inner sep=0pt]
\draw ($(0:0)$) node () {} --++ ($(0:2/8)$) node () {} --++ ($(120:2/8)$) node () {} --++ ($(240:2/8)$) node () {};
\draw ($(0:2/8)$) node () {} --++ ($(0:2/8)$) node () {} --++ ($(120:2/8)$) node () {} --++ ($(240:2/8)$) node () {};
\draw ($(60:2/8)$) node () {} --++ ($(0:2/8)$) node () {} --++ ($(120:2/8)$) node () {} --++ ($(240:2/8)$) node () {};
\foreach \a in {0,60}{
\draw ($(\a:2/4)$) node () {} --++ ($(0:2/8)$) node () {} --++ ($(120:2/8)$) node () {} --++ ($(240:2/8)$) node () {};
\foreach \b in{0,60}{
\draw ($(\a:2/4)+(\b:2/8)$)node () {} --++ ($(0:2/8)$) node () {} --++ ($(120:2/8)$) node () {} --++ ($(240:2/8)$) node () {};
}
}
\foreach \c in{0,60}{
\draw ($(\c:2/2)$) node () {} --++ ($(0:2/8)$) node () {} --++ ($(120:2/8)$) node () {} --++ ($(240:2/8)$) node () {};
\draw ($(\c:2/2)+(0:2/8)$) node () {} --++ ($(0:2/8)$) node () {} --++ ($(120:2/8)$) node () {} --++ ($(240:2/8)$) node () {};
\draw ($(\c:2/2)+(60:2/8)$) node () {} --++ ($(0:2/8)$) node () {} --++ ($(120:2/8)$) node () {} --++ ($(240:2/8)$) node () {};
\foreach \a in {0,60}{
\draw ($(\c:2/2)+(\a:2/4)$) node () {} --++ ($(0:2/8)$) node () {} --++ ($(120:2/8)$) node () {} --++ ($(240:2/8)$) node () {};
\foreach \b in{0,60}{
\draw ($(\c:2/2)+(\a:2/4)+(\b:2/8)$)node () {} --++ ($(0:2/8)$) node () {} --++ ($(120:2/8)$) node () {} --++ ($(240:2/8)$) node () {};
}
}
}


\end{tikzpicture}
\\ [1em]
$V_0$ \hspace*{2em}& $V_1$ \hspace*{2em}& $V_2$ \hspace*{2em}& $V_3$
\end{tabular}
\end{center}
\caption{Graph approximations of the Sierpinski gasket.}\label{F:SG_approx}
\end{figure}

The graph energy associated with the $m$-level approximation is given by
\begin{equation}\label{E:graph_energy_m}
\mathcal{E}_m (u):=\frac{1}{2}\sum_{p\stackrel{m}{\sim} q}(u(q)-u(p))^2,
\end{equation}
where $p\stackrel{m}{\sim}q$ means that $p,q\in V_m$ are neighbors. The standard energy on $\SG$ arises as the limit
\begin{equation*}
    \mathcal{E}(u):=\lim_{m\to\infty}\Big(\frac{5}{3}\Big)^m\mathcal{E}_m (u),
\end{equation*}
which is extended to a bilinear form $\mathcal{E}(u,v)$ through polarization. The ``Neumann domain'' of $\mathcal{E}$, denoted by $\mathcal{F}$, is obtained as the closure of continuous functions with respect to $(\mathcal{E}(u)+\|u\|_{L^2})^{1/2}$. The Dirichlet domain $\mathcal{F}_0$, consists of those functions in $\mathcal{F}$ that are zero on the boundary $V_0$. The bilinear forms $(\mathcal{E},\mathcal{F})$ and $(\mathcal{E},\mathcal{F}_0)$ are in fact local and regular Dirichlet forms~\cite[Theorem 3.4.6]{Kig01}.

\medskip

As such, each Dirichlet form has an associated infinitesimal generator, $-\Delta_N$ and $-\Delta_D$, that is a non-negative self-adjoint operator corresponding to the Laplace operator on $\SG$ with Neumann, respectively Dirichlet, boundary conditions. More precisely,
\begin{enumerate}[wide=0em,label=(\roman*),itemsep=.5em]
    \item for any $u\in\mathcal{F}$, we say that $-\Delta_Nu=f$ if and only if
    \begin{equation*}
        \mathcal{E}(u,v)=\langle f,v\rangle\qquad\forall\,v\in\mathcal{F},
    \end{equation*}
    \item for any $u\in\mathcal{F}_0$, we say that $-\Delta_Du=f$ if and only if
    \begin{equation*}
        \mathcal{E}(u,v)=\langle f,v\rangle\qquad\forall\,v\in\mathcal{F}_0.
    \end{equation*}
\end{enumerate}
Throughout the exposition, we will drop the index N and D from $-\Delta$ whenever a statement holds equally for both operators.  
Since $-\Delta$ is self-adjoint and $L^2(\SG,\mu)$ is Hilbert, the operator admits a unique spectral resolution, that is
\begin{equation}\label{E:spectral_resolution}
    -\Delta u=\int_0^\infty \lambda\,dE_\lambda(u)
\end{equation}
for all $u\in{\rm dom}\,(-\Delta)$, where $\lambda$ denotes the spectral parameter and $dE_\lambda$ the $L^2$-valued spectral measure associated with $-\Delta$, 
see e.g.~\cite[Theorem 1, Section XI.6]{Yos80}. 

\medskip

As the infinitesimal generator of a Dirichlet form, the Laplace operator $-\Delta$ is associated with a semigroup $\{P_t\}_{t\geq 0}$ that solves the heat equation in the sense that 
\[
\partial_s P_su=-\Delta P_su
\]
for any $u\in {\rm dom}\,(-\Delta)$, c.f.~\cite[(A.1.5)]{BGL14}. In view of~\eqref{E:spectral_resolution} and by virtue of the spectral theorem, see e.g.~\cite[Theorem A.4.2]{BGL14}, the semigroup also admits the expression
\begin{equation}\label{E:def_Pt}
    P_tu=\int_0^\infty e^{-\lambda t}dE_\lambda(u)
\end{equation}
for any $u\in L^2(\SG,\mu)$. In addition, it has an associated heat kernel $p_t(x,y)$ that satisfies sub-Gaussian estimates of the type
\begin{equation}\label{E:subG_HKE}
    p_t(x,y)
    \asymp c_1 t^{-d_S/2} \exp \bigg( - c_2\Big(\frac{d(x,y)^{d_W}}{t}\Big)^{\frac1{d_W-1}} \bigg)
            \end{equation}
for $\mu$-a.e. $x,y\in\SG$ and $t\in (0,t_0)$, see e.g.~\cite{Kum93}. Here, $d_S$ and $d_W$ denote the spectral, respectively the walk, dimension of $\SG$. We refer to~\cite{Kum93} and~\cite[Section 5]{Bar98} for details about these dimensions, which in the case of the Sierpinski gasket are
\begin{equation*}
    d_W=\frac{\log 5}{\log 2}\qquad\text{and}\qquad d_S=\frac{\log 9}{\log 5}=\frac{2d_H}{d_W}.
\end{equation*}
 Note in particular, that $d_W>2$ and thus
 \begin{equation*}
    \frac{d_S}{2}=\frac{d_H}{d_W}<\frac{d_H}{2}<1.
    \end{equation*}   
As it will be explained in the next section, the latter quantity will be the regularity threshold in the main result, Theorem~\ref{T:no_cont_below_Sobolev}.

\subsection{Fractional Sobolev spaces}
We recall the construction of fractional Sobolev spaces introduced by Strichartz in~\cite{Str03}. For any $s>0$, the spectral theorem allows one to write 
\begin{equation*}
    (I-\Delta)^{-s/2}u=\int_0^\infty (1+\lambda)^{-s/2}dE_\lambda(u),
\end{equation*}
for $u\in L^2(\SG,\mu)$, see e.g.~\cite[Theorem A.4.2]{BGL14}. Now, consider the image of $L^2(\SG,\mu)$ under that operator, i.e.
\begin{equation*}
    H^s_{N}(\SG):=\big\{(I-\Delta_N)^{-s/2}u\colon u\in L^2(\SG,\mu)\big\}
\end{equation*}
and define analogously $H^s_D(\SG)$ with the Dirichlet Laplacian $-\Delta_D$. These spaces were studied from a potential-theoretic viewpoint in~\cite{HZ05}. 

\medskip

Further, consider the space of harmonic functions
\begin{equation*}
    \mathcal{H}_0(\SG):=\{u\in \mathcal{F}\colon~\mathcal{E}(u,v)=0~\forall\,v\in\mathcal{F}_0\}
\end{equation*}
c.f.~\cite[Section 3]{Str03} or~\cite[Section 3]{CQ22}.

\begin{definition}
    Let $0<s<2$. The (Neumann, respectively Dirichlet) fractional Sobolev space of order $s$ is defined as
    \begin{equation}\label{E:def_Hs}
        \begin{aligned}
        H^s(\SG):=H^s_{N}(\SG)\oplus \mathcal{H}_{0}(\SG),\\
        H^s_0(\SG):=H^s_{D}(\SG)\oplus \mathcal{H}_{0}(\SG),
        \end{aligned}
    \end{equation}
    and the associated seminorms are given by
    \begin{equation}\label{E:def_seminorm_Hs}
        \|u\|_{\dot{H}^s}:=\|(I-\Delta_N)^{s/2}u\|_{L^2}\qquad\text{and}\qquad\|u\|_{\dot{H}^s_0}:=\|(I-\Delta_D)^{s/2}u\|_{L^2}.
    \end{equation}
\end{definition}
\begin{remark}
    The spaces $H^1(\SG)$ and $H^1_0(\SG)$ coincide with the domain of the Neumann, respectively Dirichlet, energy forms $(\mathcal{E},\mathcal{F})$ and $(\mathcal{E},\mathcal{F}_0)$ from Section~\ref{SS:SG_basics}. Notice that in this case, $\mathcal{H}_0\subseteq H_N^s(\SG)$, see e.g.~\cite[Theorem 3.7]{Str03} and~\cite[Corollary 3.3]{CQ22}.
\end{remark}

The definition above can be generalized to higher orders of regularity, see e.g.~\cite{Str03,CQ22}. Worthwhile mentioning is the fact that both spaces coincide in the regularity range of interest for our purposes: It was proved in~\cite[Corollary 3.6]{Str03}, see also~\cite[Theorem 3.2]{CQ22} that $H^s(\SG)= H^s_0(\SG)$ when $0<s<\frac{d_S}{2}$. 

\medskip

Where does the latter regularity range come from? It is in fact dictated by the fractional Sobolev embedding theorem, which is a particular case of more general embeddings proved in~\cite[Theorem 3.11]{Str03} and~\cite[Theorem 4.1]{HZ05}.
\begin{theorem}\label{T:Sobolev_embedding}
Let $0<s<2$ and $1<q\leq\infty$. Then, $H^{s}(\SG)\subseteq L^q(\SG)$ holds 
\begin{enumerate}[wide=0em,leftmargin=1.75em,label={\rm (\roman*)}]
    \item for $0<s<\frac{d_S}{2}$ and $q=\frac{2d_S}{d_S-2s}$,
    \item for $s=\frac{d_S}{2}$ and any $1<q<\infty$,
    \item for $s>\frac{d_S}{2}$ and $q=\infty$.
\end{enumerate}
The same embeddings hold for $H^s_0(\SG)$.
\end{theorem}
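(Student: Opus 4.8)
The plan is to realize the Bessel-type potential $(I-\Delta_N)^{-s/2}$ as an integral operator and to estimate its kernel via the sub-Gaussian heat-kernel bound~\eqref{E:subG_HKE}. First I would note that it suffices to bound $\|(I-\Delta_N)^{-s/2}u\|_{L^q(\SG)}$ by $\|u\|_{L^2(\SG)}$ for every $u\in L^2(\SG)$, since $H^s(\SG)=H^s_N(\SG)\oplus\mathcal H_0(\SG)$ and the finite-dimensional space $\mathcal H_0(\SG)$ of energy-minimizing (harmonic) extensions of boundary data on $V_0$ consists of continuous, hence bounded, functions, which lie in every $L^q(\SG)$. By subordination one writes
\begin{equation*}
(I-\Delta_N)^{-s/2}u(x)=\frac{1}{\Gamma(s/2)}\int_0^\infty t^{s/2-1}e^{-t}P_tu(x)\,dt=\int_{\SG}G_s(x,y)\,u(y)\,d\mu(y),
\end{equation*}
with kernel
\begin{equation*}
G_s(x,y):=\frac{1}{\Gamma(s/2)}\int_0^\infty t^{s/2-1}e^{-t}\,p_t(x,y)\,dt .
\end{equation*}
The factor $e^{-t}$ makes the part of this integral over $t\geq t_0$ bounded, so only small $t$ matters; there I would insert the upper bound from~\eqref{E:subG_HKE} and split the $t$-integral at $t\simeq d(x,y)^{d_W}$. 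Since the exponential factor is super-polynomially small for $t\lesssim d(x,y)^{d_W}$ and comparable to $1$ for $t\gtrsim d(x,y)^{d_W}$, recalling $d_S=2d_H/d_W$ a routine computation yields
\begin{equation*}
G_s(x,y)\ \lesssim\
\begin{cases}
d(x,y)^{\,sd_W/2-d_H} & \text{if }0<s<d_S,\\
1+\log\frac{1}{d(x,y)} & \text{if }s=d_S,\\
1 & \text{if }d_S<s<2 .
\end{cases}
\end{equation*}

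Granting this kernel estimate, the three cases would follow from classical operator bounds on the Ahlfors $d_H$-regular space $(\SG,d,\mu)$. For (i), when $0<s<\tfrac{d_S}{2}$ one has $G_s(x,y)\lesssim d(x,y)^{\alpha-d_H}$ with $\alpha:=\tfrac{sd_W}{2}\in\bigl(0,\tfrac{d_H}{2}\bigr)$, so the operator is dominated by the Riesz potential of order $\alpha$, and the Hardy--Littlewood--Sobolev inequality on a space of homogeneous type gives $L^2\to L^q$ boundedness with $\tfrac1q=\tfrac12-\tfrac{\alpha}{d_H}=\tfrac12-\tfrac{s}{d_S}$, that is $q=\tfrac{2d_S}{d_S-2s}$. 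For (iii), when $\tfrac{d_S}{2}<s<2$ the singularity of $G_s(x,\cdot)$ in the second variable has exponent strictly below $\tfrac{d_H}{2}$ (or the kernel is merely logarithmic, or bounded), hence $\sup_x\|G_s(x,\cdot)\|_{L^2(\SG)}<\infty$, and Cauchy--Schwarz gives $L^2\to L^\infty$ boundedness. Case (ii) would then be deduced from (i): since $\|u\|_{\dot H^{s}}\lesssim\|u\|_{\dot H^{d_S/2}}$ for $0<s<\tfrac{d_S}{2}$, one has $H^{d_S/2}(\SG)\subseteq H^{s}(\SG)\subseteq L^{2d_S/(d_S-2s)}(\SG)$, and letting $s\uparrow\tfrac{d_S}{2}$ shows $H^{d_S/2}(\SG)\subseteq L^q(\SG)$ for every finite $q$.

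The Dirichlet statements would be handled identically, replacing $p_t$ by the Dirichlet heat kernel, which it dominates so that the kernel bound above is unchanged, and using $H^s_0(\SG)=H^s_D(\SG)\oplus\mathcal H_0(\SG)$; for $0<s<\tfrac{d_S}{2}$ one in fact already has $H^s(\SG)=H^s_0(\SG)$. The hard part will be the passage from~\eqref{E:subG_HKE} to the pointwise bound on $G_s$ --- making the split of the $t$-integral rigorous uniformly across the three regimes and tracking the endpoint behavior --- together with invoking the correct form of the fractional-integration (Hardy--Littlewood--Sobolev) theorem on the metric measure space $(\SG,d,\mu)$ rather than on Euclidean space; here the choice of $I-\Delta$ over $-\Delta$ is exactly what keeps the subordination integral convergent at infinity and eliminates any role of the bottom of the spectrum.
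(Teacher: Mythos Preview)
The paper does not supply its own proof of Theorem~\ref{T:Sobolev_embedding}: immediately before the statement it records that the result ``is a particular case of more general embeddings proved in~\cite[Theorem 3.11]{Str03} and~\cite[Theorem 4.1]{HZ05}'', and no argument is given in the text. So there is nothing in the paper to compare your proposal against line by line.

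That said, your route is essentially the one taken in the cited references, and it is correct in outline. The Hu--Z\"ahle paper~\cite{HZ05} obtains the embeddings precisely by estimating the Bessel kernel $G_s$ through the sub-Gaussian heat-kernel bound and then invoking fractional integration on the Ahlfors $d_H$-regular space $(\SG,d,\mu)$; Strichartz in~\cite{Str03} proceeds in the same spirit. Your computation of the pointwise behavior of $G_s$ (splitting the subordination integral at $t\simeq d(x,y)^{d_W}$, using $d_S=2d_H/d_W$) yields the right singularity $d(x,y)^{sd_W/2-d_H}$, your reduction of (i) to Hardy--Littlewood--Sobolev with $\alpha=sd_W/2$ gives exactly $1/q=1/2-s/d_S$, your treatment of (iii) via $\sup_x\|G_s(x,\cdot)\|_{L^2}<\infty$ is the standard one, and deducing (ii) from (i) by monotonicity of the $H^s$-scale is clean. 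Two small points worth tightening when you write it up: the heat-kernel bound~\eqref{E:subG_HKE} is stated only for $t\in(0,t_0)$, so on $[t_0,\infty)$ you should invoke ultracontractivity (already recorded in the paper) to get $\sup_{x,y}p_t(x,y)<\infty$ there; and the Hardy--Littlewood--Sobolev step requires the metric-measure-space version (for spaces of homogeneous type), which you correctly flag as the one nontrivial external input.
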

Once again we note that, on $\SG$, the spectral dimension $d_S$ satisfies $\frac{d_S}{2}<1<\frac{d_H}{2}$. This is in contrast with the classical setting: On the $d$-dimensional torus $\mathbb{T}^d$ or the sphere $\mathbb{S}^d$, it holds that $d_S=d_H=d$ and the threshold for the embedding is thus $\frac{d}{2}$. 
For convenience, we rephrase the relevant embedding from Theorem~\ref{T:Sobolev_embedding} in the following manner.

\begin{corollary}\label{C:Sobolev_embedding_b}
    Let $2<q< \infty$. Then $H^{\sigma_q}(\SG)\subseteq L^q(\SG)$, where
    \begin{equation*}
        \sigma_q:=\frac{d_S}{2}\bigg(1-\frac{2}{q}\bigg).
    \end{equation*}
\end{corollary}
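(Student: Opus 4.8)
The plan is to deduce Corollary~\ref{C:Sobolev_embedding_b} directly from part~(i) of Theorem~\ref{T:Sobolev_embedding} by a change of variables in the exponent. Given $2<q<\infty$, I want to find $s\in(0,\tfrac{d_S}{2})$ such that $q=\tfrac{2d_S}{d_S-2s}$, and then check that this $s$ equals $\sigma_q=\tfrac{d_S}{2}(1-\tfrac2q)$.

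First I would solve $q=\frac{2d_S}{d_S-2s}$ for $s$: cross-multiplying gives $q(d_S-2s)=2d_S$, hence $2qs=qd_S-2d_S=d_S(q-2)$, so
\[
 s=\frac{d_S(q-2)}{2q}=\frac{d_S}{2}\left(1-\frac2q\right)=\sigma_q.
\]
Second, I would verify that this value of $s$ lies in the admissible range $(0,\tfrac{d_S}{2})$: since $q>2$ we have $0<1-\tfrac2q<1$, so indeed $0<\sigma_q<\tfrac{d_S}{2}$, and in particular $\sigma_q<2$, so Theorem~\ref{T:Sobolev_embedding}(i) applies with $s=\sigma_q$. Plugging back in, part~(i) yields $H^{\sigma_q}(\SG)\subseteq L^q(\SG)$ as claimed, and the corresponding statement for $H^{\sigma_q}_0(\SG)$ follows from the last sentence of Theorem~\ref{T:Sobolev_embedding}.

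There is essentially no obstacle here: this is a bookkeeping reformulation, the only thing to be careful about is confirming the range constraint $\sigma_q<\tfrac{d_S}{2}$ (automatic from $q<\infty$) and $\sigma_q>0$ (automatic from $q>2$), so that we genuinely land in regime~(i) rather than the endpoint case~(ii) or the $L^\infty$ case~(iii). I would present this in two or three lines, since the content is just the algebraic inversion of the exponent relation.
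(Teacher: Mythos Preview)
Your proposal is correct and matches the paper's treatment: the corollary is presented there as a direct rephrasing of Theorem~\ref{T:Sobolev_embedding}(i), with no separate proof, and your algebraic inversion $s=\sigma_q$ together with the range check $0<\sigma_q<\tfrac{d_S}{2}$ is exactly what underlies that rephrasing.
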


Note that, in the case $q=\infty$, it only holds that $H^{\sigma_q+\varepsilon}(\SG)\subseteq L^q(\SG)$, however we will not make use of this embedding. As already mentioned, this note will focus on the low-regularity range, that is,
\begin{center}
    \emph{assume throughout that $0<s<\frac{d_S}{2}$}.
\end{center}

\medskip

In addition, and to avoid notational burden, we will omit ``$\SG$'' from all function spaces and only consider the Neumann Sobolev space $H^s$. 
 
\subsection{The Schr\"odinger equation}
The main purpose of this paper is to study the well-posedness of the nonlinear Schr\"odinger equation 
\begin{equation}\label{E:NLS_main}\tag{${\rm NLS}_k$}
    \begin{cases}
    i\partial_t u+\Delta u= |u|^{2k}u,&\quad (t,x)\in \mathbb{R}\times\SG\\
    u(0,\cdot)=u_0\in H^s,
    \end{cases}
\end{equation}
with $k\geq 1$, in the low-regularity range $0<s<\sigma_\infty$. To begin with, we shall consider the solution to its associated linear (free) equation
\begin{equation}\label{E:LS_main}\tag{LS}
    \begin{cases}
    i\partial_t u=-\Delta u,\quad(t,x)\in \mathbb{R}\times\SG\\
    u(0,\cdot)=u_0.
    \end{cases}
\end{equation}
Note first that the heat kernel estimates~\eqref{E:subG_HKE} in particular imply that the semigroup $\{P_t\}_{t\geq 0}$ is ultracontractive, see e.g.~\cite[Theorem 2.1.5]{Dav90}. The Laplace operator has thus compact resolvent and its spectrum is pure point with only accumulation point at infinity~\cite[Theorem 2.1.4]{Dav90}.
Moreover, we
\begin{center}
    \emph{choose once and for all an orthonormal basis of eigenfunctions} $\{\varphi\}$
\end{center}

\medskip

as described in detail in Section~\ref{SS:Eigenfunctions}. That basis will play the role of the ``Fourier basis'' in the sequel to write functions, operators and norms. In this way, the semigroup~\eqref{E:def_Pt} may be expressed as
\begin{equation}\label{E:Semigroup_spectral}
    P_tu(x)=e^{t\Delta}u(x)=\sum_{\varphi}e^{-\lambda_{\varphi}t}\langle u,\varphi\rangle\varphi(x),
\end{equation}
where $\lambda_\varphi$ denotes the eigenvalue associated with the eigenfunction $\varphi$. The fractional Sobolev seminorm~\eqref{E:def_seminorm_Hs} may as well be written as
\begin{equation}\label{E:spectral_seminorm_Hs}
    \|u\|_{\dot{H}^s}^2=\sum_{\varphi}(1+\lambda_\varphi)^s|\langle\varphi,u\rangle|^2.
\end{equation}
The representation~\eqref{E:Semigroup_spectral} is convenient to define the \emph{Schr\"odinger propagator} associated with the linear equation~\eqref{E:LS_main}.
\begin{proposition}
    For any $u_0\in {\rm dom}\,(-\Delta)$, the function
    \begin{equation}\label{E:spectral_sol_LS}
        u(t,x)=S_tu_0(x):=\sum_{\varphi}e^{-it\lambda_{\varphi}}\langle u_0,\varphi\rangle\varphi(x),\quad(t,x)\in \mathbb{R}\times\SG
    \end{equation}
    is a solution to the linear Schr\"odinger equation~\eqref{E:LS_main}.
\end{proposition}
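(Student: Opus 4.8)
The plan is to verify directly that the series defining $S_tu_0$ in \eqref{E:spectral_sol_LS} solves \eqref{E:LS_main}, the only inputs being the spectral decomposition of $-\Delta$ and the hypothesis $u_0\in\operatorname{dom}(-\Delta)$. Since $\{\varphi\}$ is an orthonormal basis, Parseval gives $\sum_{\varphi}|\langle u_0,\varphi\rangle|^2=\|u_0\|_{L^2}^2<\infty$, and because $|e^{-it\lambda_{\varphi}}|=1$ the partial sums of \eqref{E:spectral_sol_LS} form a Cauchy sequence in $L^2$, uniformly in $t\in\mathbb{R}$; hence $S_tu_0\in L^2$ is well defined for every $t$ and $t\mapsto S_tu_0$ is strongly continuous into $L^2$. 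Evaluating at $t=0$ and invoking completeness of $\{\varphi\}$ yields $S_0u_0=\sum_{\varphi}\langle u_0,\varphi\rangle\varphi=u_0$, so the initial condition holds.

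Next I would handle the time derivative. Each term is smooth in $t$ with $\partial_t\bigl(e^{-it\lambda_{\varphi}}\langle u_0,\varphi\rangle\varphi\bigr)=-i\lambda_{\varphi}e^{-it\lambda_{\varphi}}\langle u_0,\varphi\rangle\varphi$. The hypothesis $u_0\in\operatorname{dom}(-\Delta)$ means precisely that $\sum_{\varphi}\lambda_{\varphi}^2|\langle u_0,\varphi\rangle|^2=\|\Delta u_0\|_{L^2}^2<\infty$; consequently the differentiated series converges in $L^2$ uniformly in $t$, which licenses term-by-term differentiation and gives, for every $t$,
\[
    \partial_t S_tu_0=-i\sum_{\varphi}\lambda_{\varphi}e^{-it\lambda_{\varphi}}\langle u_0,\varphi\rangle\varphi\in L^2.
\]
The same square-summability of $\bigl(\lambda_{\varphi}\langle u_0,\varphi\rangle\bigr)_{\varphi}$ shows $S_tu_0\in\operatorname{dom}(-\Delta)$ for each $t$, since the relevant graph norm is unchanged by the unimodular factors $e^{-it\lambda_{\varphi}}$; and because $-\Delta$ is closed (self-adjoint) it commutes with the $L^2$-convergent sum, so, using $-\Delta\varphi=\lambda_{\varphi}\varphi$,
\[
    -\Delta S_tu_0=\sum_{\varphi}\lambda_{\varphi}e^{-it\lambda_{\varphi}}\langle u_0,\varphi\rangle\varphi.
\]
Comparing the last two displays gives $i\partial_t S_tu_0=-\Delta S_tu_0$, which is exactly \eqref{E:LS_main}.

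There is essentially no hard step here: the only point requiring care is the interchange of the unbounded operator $-\Delta$ with the infinite sum, and of $\partial_t$ with the sum, both controlled by the single quantitative input $\sum_{\varphi}(1+\lambda_{\varphi})^2|\langle u_0,\varphi\rangle|^2<\infty$, equivalent to $u_0\in\operatorname{dom}(-\Delta)$. Alternatively, one may bypass these manipulations entirely: $-\Delta$ is non-negative self-adjoint, so by Stone's theorem $\{S_t\}_{t\in\mathbb{R}}$ defined through the functional calculus \eqref{E:spectral_resolution} is a strongly continuous unitary group with generator $i\Delta$, and \eqref{E:spectral_sol_LS} is simply the spectral representation of $e^{it\Delta}u_0$ in the eigenbasis, whence the conclusion is immediate. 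I would present the hands-on computation above and note the Stone's-theorem shortcut as a remark.
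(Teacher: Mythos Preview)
Your proof is correct. The paper, however, argues differently: it observes that $S_t=P_{it}$ in view of~\eqref{E:Semigroup_spectral} and~\eqref{E:spectral_sol_LS}, and then simply imports the semigroup identity $\partial_sP_su_0=-\Delta P_su_0$ (cited from~\cite{BGL14}) to compute
\[
i\partial_t S_tu_0=i\partial_t P_{it}u_0=(i)^2[\partial_sP_s]_{s=it}u_0=\Delta P_{it}u_0=\Delta S_tu_0.
\]
That one-line argument leans on the analytic extension of the heat semigroup and an external reference, whereas your termwise verification is entirely self-contained and makes transparent exactly where the hypothesis $u_0\in\operatorname{dom}(-\Delta)$ enters (namely, to justify both the interchange of $\partial_t$ with the sum and the application of the closed operator $-\Delta$ to the $L^2$-limit). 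Your Stone's-theorem remark is in spirit closer to what the paper does---both invoke an abstract functional-calculus fact---though the paper routes it through the heat semigroup rather than the unitary group directly. Either approach is fine here; yours is arguably cleaner for a reader who does not want to chase the semigroup reference.
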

\begin{proof}
   Since $S_tu=P_{it}u_0$ and $\partial_s P_su_0=-\Delta P_su_0$, c.f.~\cite[A.1.5]{BGL14}, 
\[
    i\partial_t S_tu_0(x)=i\partial_t P_{it}u_0=(i)^2[\partial_s P_s]_{s=it} u_0=-[(-\Delta)P_s]_{s=it} u_0=\Delta P_{it} u_0=\Delta S_t u_0.
\]
\end{proof}
\begin{remark}\label{R:Isometry}
    A short explicit computation using the representation~\eqref{E:def_seminorm_Hs} shows that the operator $S_t$ is an isometry in $H^s$ and $H^s_0$ for any $t>0$.
\end{remark}

\section{Sobolev embedding}\label{S:Sobolev_embedding}

\subsection{Eigenfunctions on the Sierpinski gasket}\label{SS:Eigenfunctions}
The characterization of the standard Dirichlet and Neumann Laplace operator on $\SG$ discussed in Section~\ref{SS:SG_basics} also provides a way to characterize their associated eigenfunctions: Neumann eigenfunctions $\varphi\in H^1$ satisfy
\begin{equation*}
    \mathcal{E}(\varphi,v)=\lambda_{\varphi}\langle\varphi,v\rangle\qquad\forall\,v\in H^1,
\end{equation*}
where $\lambda_{\varphi}\geq 0$ is the associated eigenvalue, whereas Dirichlet eigenfunctions $\tilde{\varphi}\in H^1_0$ satisfy
\begin{equation*}
    \mathcal{E}(\tilde{\varphi},v)=\lambda_{\tilde{\varphi}}\langle\tilde{\varphi},v\rangle\qquad\forall\,v\in H^1_0,
\end{equation*}
and in this case $\lambda_{\tilde{\varphi}}> 0$ is the associated eigenvalue, 
see e.g.~\cite[Proposition 4.1.2]{Kig01}. 

\medskip

Both the spectrum and a basis of eigenfunctions of $-\Delta$ can be obtained through the spectral decimation method introduced in~\cite{RT82}; specifically for eigenfunctions see~\cite[Section 4]{DSV99} or~\cite[Section 3.3]{Str06}. The method provides an algorithm that is applied recursively to extend eigenfunctions at each finite approximation level $V_m$ to the whole $\SG$. Without going into the details, we do point out that the basis obtained through this process is \emph{not} orthogonal, yet it can be orthonormalized using Gram-Schmidt, c.f.~\cite[Section 3]{ORS10}. What is relevant for our purposes is the fact that the basis will contain \emph{localized} eigenfunctions with compact support strictly inside $\SG$.

\medskip

The existence of localized eigenfunctions is an unprecedented feature of the Laplace operator on sufficiently symmetric fractals like $\SG$. Localized functions are simultaneously Neumann and Dirichlet eigenfunctions, and they vanish outside an open set not touching the boundary $V_0$. In~\cite{BK97}, Barlow and Kigami showed that the existence of such eigenfunctions in fact implies the possibility to construct localized eigenfunctions that are zero outside an open set $U\subset\SG$ \emph{for any} open set $U\subset\SG$. In particular, it is possible to find eigenfunctions with suitably large eigenvalues whose support has size comparable to an inverse power of their eigenvalue. Incidentally, these eigenvalues have very high multiplicity. 
The construction of such eigenfunctions can be read e.g. in~\cite[Section 3.4]{Str06}; we recall it in Proposition~\ref{P:existence_localized} below. 

\begin{proposition}\label{P:existence_localized}
    There exists a family of localized eigenfunctions $\{\psi^{(j)}\}_{j\geq 2}$ that satisfy
    \begin{enumerate}[wide=0em,itemsep=.5em,label={\rm (\roman*)}]
        \item $\displaystyle\|\psi^{(j)}\|_{L^2}=1$,
        \item\label{E:localized_evalue} $\displaystyle \lambda_{\psi^{(j)}}= c_65^{j}=:\Lambda_j$ {with $c_6>0$ independent of $j$},
       \item\label{E:localized_feature} $\displaystyle 3^{-j}\leq\frac{1}{2}\mu(\supp\psi^{(j)})\leq 3^{-j+1}$.
    \end{enumerate}
\end{proposition}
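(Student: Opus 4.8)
The plan is to construct the family $\{\psi^{(j)}\}_{j\geq 2}$ directly from the spectral decimation machinery and the Barlow--Kigami localization, following the recipe described in~\cite[Section 3.4]{Str06}. First I would recall the simplest localized eigenfunction of $-\Delta$ on $\SG$: the antisymmetric eigenfunction supported on the ``hexagonal'' neighborhood of a level-one vertex (equivalently, the eigenfunction of eigenvalue $6$ on the first graph approximation that is forced to remain at every later stage because the spectral decimation polynomial $R(z)=z(5-z)$ fixes the value in a way that keeps it supported on finitely many cells). Its support is a union of finitely many $1$-cells; call $\psi^{(1)}_0$ this normalized prototype, with $\lambda_{\psi^{(1)}_0}=c_6\cdot 5$ for the appropriate constant $c_6>0$ coming from the renormalization factor $(5/3)$ and the side-length scaling. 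The key point here is that $c_6$ depends only on the geometry of that prototype, not on any parameter.

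Next I would exploit the self-similarity~\eqref{E:SG_fixed_point} and the scaling behavior of the Laplacian under the contractions $F_w$. Precisely, if $\varphi$ is an eigenfunction of $-\Delta$ with eigenvalue $\lambda$ and $w\in\{0,1,2\}^{m}$ is a word of length $m$, then $\varphi\circ F_w^{-1}$ (extended by zero off the cell $F_w(\SG)$) is again an eigenfunction, with eigenvalue $5^{m}\lambda$, because the energy renormalization contributes $(5/3)^m$ and the measure renormalization contributes $3^{-m}$, and the quotient in the Rayleigh quotient is exactly $5^m$; the extension by zero is legitimate because $\psi^{(1)}_0$ vanishes on the boundary of its support, which includes the vertices of $F_w(V_0)$, so no continuity or energy is lost across the gluing. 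Taking $w$ to be, say, $w=0\cdots0$ of length $m=j-1$ and setting $\psi^{(j)}:= 3^{(j-1)/2}\,\psi^{(1)}_0\circ F_{0^{j-1}}^{-1}$ gives $\|\psi^{(j)}\|_{L^2}=1$ by~\eqref{E:def_measure_SG}, and $\lambda_{\psi^{(j)}}= 5^{j-1}\cdot 5 c_6 = c_6 5^{j}$ after absorbing constants, which is~(i) and~(ii). For~(iii), since $\supp\psi^{(1)}_0$ is a fixed finite union of $1$-cells it has measure $2\cdot 3^{-1}\cdot N_0$ for some fixed integer $N_0$; rescaling by $F_{0^{j-1}}$ multiplies the measure by $3^{-(j-1)}$, so $\mu(\supp\psi^{(j)}) = 2 N_0\, 3^{-j}$, and by choosing the prototype (or, if needed, passing to a sub-cell via Barlow--Kigami's flexibility in choosing the open set $U$) so that $N_0$ lies in the window forcing $3^{-j}\leq \tfrac12\mu(\supp\psi^{(j)})\leq 3^{-j+1}$, we obtain~(iii). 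One can also simply remark that $\tfrac12\mu(\supp\psi^{(j)}) = N_0 3^{-j}$ and choose the normalization of the statement so that $N_0\in\{3,\ldots\}$ falls in range, or absorb the harmless constant into the already-flexible bound.

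The main obstacle, and the step deserving genuine care, is verifying that extension-by-zero of the rescaled prototype really produces an honest eigenfunction of the \emph{global} Neumann (and Dirichlet) Laplacian on $\SG$, i.e. that it lies in $\dom(-\Delta)$ and satisfies $\mathcal E(\psi^{(j)},v)=\lambda_{\psi^{(j)}}\langle \psi^{(j)},v\rangle$ for \emph{all} $v\in\mathcal F$ (resp. $\mathcal F_0$), not merely for test functions supported in the cell. This requires: (a) $\psi^{(j)}\in\mathcal F$, which follows because $\psi^{(1)}_0$ vanishes on $\partial(\supp\psi^{(1)}_0)$ together with its normal derivatives in the sense that the harmonic extension is zero, so gluing across cell boundaries preserves finite energy; and (b) the boundary terms in the Gauss--Green formula on $\SG$ vanish, which is exactly the statement that $\psi^{(1)}_0$ has vanishing normal derivatives at the junction points of its support — a property one reads off from the spectral decimation values, or cites directly from~\cite[Section 3.4]{Str06} and~\cite{BK97}. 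Once (a)--(b) are in place, testing against an arbitrary $v\in\mathcal F$ splits the energy integral over the cells meeting $\supp\psi^{(j)}$, on each of which the eigenfunction identity and the vanishing normal derivatives give the result, while over all other cells both sides vanish. I would present this verification as the heart of the proof and the remaining items~(i)--(iii) as bookkeeping via~\eqref{E:def_measure_SG} and the $5^m$ scaling of eigenvalues.
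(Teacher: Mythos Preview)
Your approach is correct and takes a genuinely different route from the paper. The paper constructs each $\psi^{(j)}$ \emph{directly at level $j$}: it writes down the explicit ``6-series'' graph eigenfunction on $V_j$ (the pattern $2,-1,-1,1,1$ on the hexagon around a fixed $p\in V_1\setminus V_0$, zero elsewhere), then invokes the spectral decimation algorithm to extend it to $\SG$, and reads off $\lambda_{\psi^{(j)}}=5^{j-2}\lambda_1^{(6)}$ and the support inclusions from that explicit picture. You instead fix a single prototype and manufacture the rest by pushing it into a small cell via $F_{w}$ and extending by zero, obtaining the eigenvalue factor $5^{|w|}$ directly from the energy/measure renormalization $(5/3)^{|w|}\cdot 3^{|w|}$. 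Your route makes the exponential growth in (ii) and the geometric decay in (iii) entirely transparent and avoids re-running decimation at every level; the paper's route is more hands-on and, by citing the graph-eigenfunction property at level $j$, sidesteps the verification you correctly flag as the real content of your argument---namely that extension-by-zero across cell boundaries lands in $\dom(-\Delta)$ with the right weak equation (vanishing normal derivatives at the junction vertices, Gauss--Green on each cell).

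One small correction: the graph eigenvalue $6$ does \emph{not} occur on the first approximation $V_1$ (the Dirichlet eigenvalues there are $2,5,5$); the simplest localized $6$-series eigenfunction is born at level $2$, which is precisely why both the statement and the paper start at $j\ge 2$. This does not harm your argument---take the prototype to be the level-$2$ localized eigenfunction and rescale by words of length $j-2$---but you should adjust the indexing. Once you do, the hedging about ``choosing $N_0$'' or ``absorbing constants'' in (iii) becomes unnecessary: the prototype's support sits inside two $1$-cells and contains two $2$-cells, so after rescaling the bound $3^{-j}\le \tfrac12\mu(\supp\psi^{(j)})\le 3^{-j+1}$ holds exactly as stated.
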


\begin{proof}
    Fix $j\geq 2$ and $p\in V_1{\setminus}V_0$. Further, consider the unique elements $w,w'\in \{0,1,2\}^{j-1}$ for which $p=F_w(\SG)\cap F_{w'}(\SG)$ and the function $v_j\colon V_k\to\mathbb{R}$ defined as follows: $v(p)=2$, $v(x)=-1$ for $x\in F_w(V_0)\cup F_{w'}(V_0)$ direct neighbor of $p$, $v(x)=1$ for $x\in F_w(V_0)\cup F_{w'}(V_0)$ common neighbor of $q,q'\in F_w(V_0)\cup F_{w'}(V_0)$ with $v(q)=v(q')=-1$, and zero otherwise; c.f. Figure~\ref{F:6series_Vj}.
    
    \begin{figure}[H]
        \includegraphics[scale=.35]{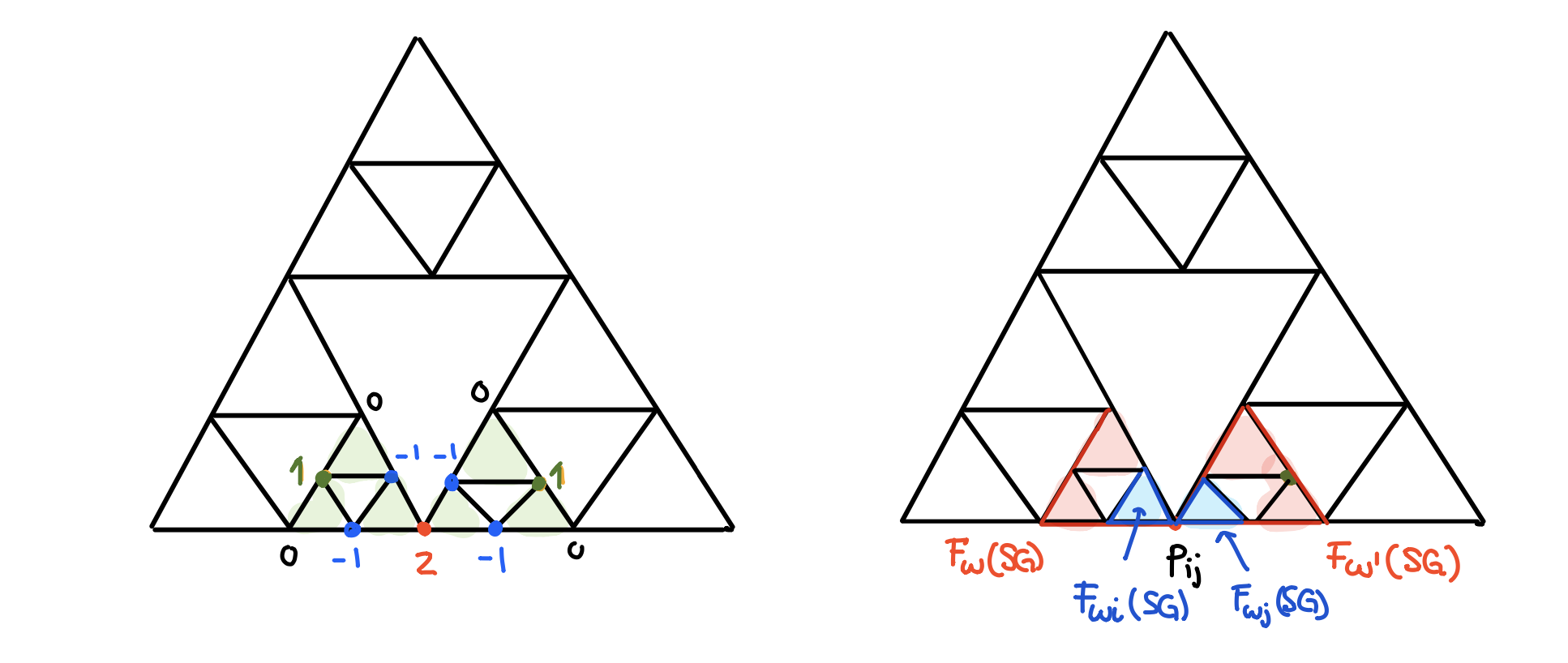}
        \caption{{\small A possible function $v\colon V_3\to\mathbb{R}$. In green the cells $F_w(V_0)$ and $F_{w'}(V_0)$. }}
        \label{F:6series_Vj}
    \end{figure}

    This is an eigenfunction of the graph Laplacian associated with the graph energy~\eqref{E:graph_energy_m} with both Dirichlet and Neumann boundary conditions~\cite[Section 3.4]{Str06}. Applying the spectral decimation method~\cite[Algorithm 2.4]{DSV99}, the function $v_k$ extends to an eigenfunction $\psi_j\in L^2$ with eigenvalue 
    \[
    \lambda_{\psi_k}=5^{j-2}\lambda_1^{(6)},
    \]
    where $\lambda_1^{(6)}\gg 1$ corresponds to the lowest non-zero Neumann eigenvalue and the sixth-lowest Dirichlet eigenvalue, c.f.~\cite[Theorem 5.1]{GRS01} or~\cite[Example 2.1]{AR22}. This proves (ii) with $c_6= 5^2\lambda_1^{(6)}$.
    
    Further, by construction it holds that $p=F_i(p_k)$ for some $0\leq i<k\leq 2$ and thus
    \[
    F_{wi}(V_0)\cup F_{w'k}(V_0)\subset \supp\psi_j\subset F_w(V_0)\cup F_{w'}(V_0),
    \]
    hence (iii) follows from~\eqref{E:def_measure_SG}. 
    Setting $\psi^{(j)}:=\frac{\psi_j}{\|\psi_j\|_{L^2}}$ we obtain the desired eigenfunction that also satisfies (i).
\end{proof}

\begin{remark}
    The eigenfunctions $\{\psi^{(j)}\}_{j\geq 2}$ from Proposition~\ref{P:existence_localized} belong to different eigenspaces, hence they are orthogonal to each other. In addition, since $\frac{d_S}{2}=\frac{\log 3}{\log 5}$, it follows that
    \begin{equation}\label{E:support_size}
        \mu(\supp\psi_j)\simeq 3^{-j}=(5^j)^{-d_S/2}=c_6^*\lambda_{\psi_j}^{-d_S/2}
    \end{equation}
   {with $c_6^*=(c_6)^{-d_S/2}$ and $c_6>0$ as in Proposition~\ref{P:existence_localized}\ref{E:localized_evalue}.}
\end{remark}
The eigenfunctions from Proposition~\ref{P:existence_localized} are constructed using the spectral decimation method and are orthogonal. In what follows, we choose as the orthonormal basis $\{\varphi\}$ of Neumann eigenfunctions of $L^2$ for~\eqref{E:def_Hs} the one obtained by: 
\begin{enumerate}[wide=0em,label=\arabic*)]
    \item performing the standard spectral decimation method, see e.g.~\cite[Section 3.3]{Str06};
    \item normalizing the eigenfunctions (note that the $\{\psi^{(j)}\}_{j\geq 2}$ from Proposition~\ref{P:existence_localized} are included);
    \item performing Gram-Schmidt while keeping $\{\psi^{(j)}\}_{j\geq 2}$ unchanged (which were already orthogonal).
\end{enumerate}
Since $\{\psi^{(j)}\}_{j\geq 2}$ are also Dirichlet eigenfunctions, a Dirichlet eigenbasis of $L^2$ that contains them can be constructed similarly.
\subsection{Sobolev embedding}
We will now use the orthonormal basis of Neumann, respectively Dirichlet, eigenvalues described in the previous section to express the fractional Sobolev seminorms~\eqref{E:spectral_seminorm_Hs}. 
The localized eigenfunctions $\{\psi^{(j)}\}_{j\geq 2}$ from Proposition~\ref{P:existence_localized} are examples of functions for which the Sobolev embedding from Theorem~\ref{T:Sobolev_embedding} is in a sense ``attained'', a phenomenon first observed in~\cite[Theorem 5.4.5]{Kig01}. Let the set of localized eigenfunctions from Proposition~\ref{P:existence_localized} be denoted by
\begin{equation}\label{E:def_Eloc}
    E_{\rm loc}:=\{\psi^{(j)}\}_{~j\geq 2}=\{\text{localized eigenfunctions}\}.
\end{equation} 

\begin{theorem}\label{T:Sobolev_saturates}
    Let $q=2k>2$ with $k\geq 1$. Any $\psi\in E_{\rm loc}$ satisfies
    \begin{equation*}
        \|\psi\|_{L^q}\simeq_{d_S,q} \|\psi\|_{{H}^{\sigma_q}}\simeq \lambda_{\psi}^{\sigma_q/2},\qquad\sigma_q=\frac{d_S}{2}\Big(1-\frac{2}{q}\Big),
    \end{equation*}
    with constants independent of the function $\psi$. The same holds for the norm $\|\cdot\|_{{H}^s_0}$.
\end{theorem}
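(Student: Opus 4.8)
The plan is to establish the chain of relations
\[
\lambda_\psi^{\sigma_q/2}\ \lesssim\ \|\psi\|_{L^q}\ \lesssim_{d_S,q}\ \|\psi\|_{H^{\sigma_q}}\ \simeq\ \lambda_\psi^{\sigma_q/2}
\]
for every $\psi\in E_{\rm loc}$, reading the implied constants off Proposition~\ref{P:existence_localized} and Corollary~\ref{C:Sobolev_embedding_b}. The rightmost equivalence is purely spectral: for the $L^2$-normalized eigenfunction $\psi$ with eigenvalue $\lambda_\psi=\Lambda_j=c_6 5^j$ one has $\|\psi\|_{H^{\sigma_q}}\simeq\|\psi\|_{\dot H^{\sigma_q}}$, and by~\eqref{E:spectral_seminorm_Hs} the seminorm equals $(1+\lambda_\psi)^{\sigma_q/2}$, which is $\simeq\lambda_\psi^{\sigma_q/2}$ with implied constant depending only on $c_6$ (recall $j\geq 2$, so $\lambda_\psi\geq 25c_6$). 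The middle inequality is precisely the fractional Sobolev embedding of Corollary~\ref{C:Sobolev_embedding_b}, which applies since $2<q<\infty$.

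The only step carrying content is the leftmost bound $\|\psi\|_{L^q}\gtrsim\lambda_\psi^{\sigma_q/2}$, and this is where the \emph{localization} of $\psi$ is decisive. Writing $K_j:=\supp\psi^{(j)}$, the fact that $\psi$ vanishes off $K_j$ allows one to run Hölder's inequality in reverse: with conjugate exponents $q/2$ and $q/(q-2)$,
\[
1=\|\psi\|_{L^2}^2=\int_{K_j}|\psi|^2\,d\mu\ \leq\ \|\psi\|_{L^q}^2\,\mu(K_j)^{1-2/q},
\]
so $\|\psi\|_{L^q}^2\gtrsim\mu(K_j)^{-(1-2/q)}$. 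Inserting the sharp support estimate~\eqref{E:support_size} (equivalently Proposition~\ref{P:existence_localized}\ref{E:localized_feature} together with $\tfrac{d_S}{2}=\tfrac{\log 3}{\log 5}$), namely $\mu(K_j)\simeq\lambda_\psi^{-d_S/2}$, gives $\|\psi\|_{L^q}^2\gtrsim\lambda_\psi^{\frac{d_S}{2}(1-2/q)}=\lambda_\psi^{\sigma_q}$. Chaining the three links closes all the equivalences, with constants depending only on $d_S$ and $q$ and uniform over $\psi\in E_{\rm loc}$.

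For the Dirichlet seminorm $\|\cdot\|_{\dot H^{\sigma_q}_0}$ the argument is verbatim: the same $\psi$ is a Dirichlet eigenfunction with the same eigenvalue, its $\dot H^{\sigma_q}_0$-size is given by the identical spectral formula, Theorem~\ref{T:Sobolev_embedding} furnishes $H^{\sigma_q}_0\subseteq L^q$, and the reverse-Hölder step never sees the boundary condition. I do not expect a genuine obstacle: the heart of the matter is the observation---already flagged after~\eqref{E:def_Eloc}, and going back to~\cite[Theorem 5.4.5]{Kig01}---that the support bound in Proposition~\ref{P:existence_localized}\ref{E:localized_feature} is calibrated precisely so that Hölder's inequality on $\supp\psi$ turns the Sobolev embedding into an equivalence. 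The only point demanding mild care is the bookkeeping of constants along $\mu(K_j)\simeq 3^{-j}\simeq\lambda_\psi^{-d_S/2}$ and $(1+\lambda_\psi)\simeq\lambda_\psi$.
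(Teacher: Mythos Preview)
Your proposal is correct and follows essentially the same approach as the paper: H\"older's inequality on $\supp\psi$ (with exponents $q/2$ and $q/(q-2)$, which is the paper's $p',p$ after the substitution $q=2p'$) gives the lower bound $\|\psi\|_{L^q}\gtrsim\lambda_\psi^{\sigma_q/2}$, the Sobolev embedding of Corollary~\ref{C:Sobolev_embedding_b} gives the upper bound, and the spectral identity~\eqref{E:spectral_seminorm_Hs} handles the $H^{\sigma_q}$ norm. Your presentation as a closed chain $\lambda_\psi^{\sigma_q/2}\lesssim\|\psi\|_{L^q}\lesssim\|\psi\|_{H^{\sigma_q}}\simeq\lambda_\psi^{\sigma_q/2}$ is arguably cleaner, but the content is identical.
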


\begin{proof}
    By virtue of H\"older's inequality and Proposition~\ref{P:existence_localized}~\ref{E:localized_feature}, see also~\eqref{E:support_size}, it follows that
    \begin{align*}
        1&=\|\psi\|_{L^2}^2=\int_{\supp\psi}\psi^2d\mu
        \leq \bigg(\int_{\supp\psi}\mathbf{1}^pd\mu\bigg)^{1/p}\bigg(\int_{\SG}\psi^{2p'}d\mu\bigg)^{1/p'}\\
        &=\mu(\supp\psi)^{1/p}\|\psi\|_{L^{2p'}}^2
        \leq 63^{-j} \|\psi\|_{L^{2p'}}^2=6c_{d_S,p}\lambda_{\psi}^{-\frac{d_S}{2p}}\|\psi\|_{L^{2p'}}^2,
    \end{align*}
    where $\frac{1}{p}+\frac{1}{p'}=1$. Thus, $\|\psi\|_{L^{2p'}}^2\apprge_{d_S,p} \lambda_{\psi}^{d_S/(2p)}$ holds for any $p>2$. Let now $q=2p'$. Then, $\frac{1}{p}=1-\frac{1}{p'}=1-\frac{2}{q}$ and therefore
    \begin{equation*}
        \|\psi\|_{L^{q}}^2\apprge_{d_S,q} \lambda_{\psi}^{\frac{d_S}{2}(1-\frac{2}{q})}\simeq_{d_S,q}\|\psi\|_{{H}^{\sigma_q}}^2.
    \end{equation*}
    In addition, for $2<q\leq \infty$, the Sobolev embedding from Theorem~\ref{T:Sobolev_embedding} yields 
    \[
        \|\psi\|_{L^{q}}^2\apprle_{d_S,q} \|\psi\|_{{H}^{\sigma_q}}
    \]
    c.f.\ Corollary~\ref{C:Sobolev_embedding_b}. The claim is now proved, and the same arguments with the Dirichlet eigenbasis yield the statement in terms of $\|\cdot\|_{H^s_0}$.
\end{proof}
\begin{remark}
From now on, we will use $\sigma_\infty$ instead of $\frac{d_S}{2}$ to stress the role of the latter quantity as the threshold for the Sobolev embedding, meaning $H^s\subseteq L^\infty$ for all $s>\sigma_\infty$, c.f. Theorem~\ref{T:Sobolev_embedding}.
\end{remark}
\section{{Negative results}}\label{S:main_result}
This section is devoted to prove the main result{s} of the paper, Theorem~\ref{T:no_cont_below_Sobolev} {and Theorem~\ref{T:no_Strichartz_cubic}}. Specifically, we find initial data for which the flow map associated with~\eqref{E:NLS_main} fails to be $C^{2k+1}$-differentiable from $H^s$ to $H^s$ for any $0<s<\sigma_\infty$, {for which the standard Strichartz estimate for cubic NLS~\eqref{Stri} does not hold below regularity $\frac{d_S}{4}$}.

\medskip

{To prove Theorem~\ref{T:no_cont_below_Sobolev} we follow}~\cite[Section 6]{Bou97}, we perform the method of Taylor expansions in our current setup. To do so, consider for fixed $k\geq 1$, $T>0$, and $0<s<\sigma_\infty$, the initial value problem
\begin{equation}\label{E:NLS_main_param}
    \begin{cases}
    i\partial_t u+\Delta_{\SG} u- \mu|u|^{2k}u=0,&\quad (t,x)\in [-T,T]\times\SG,\\
    u(0,\cdot)=\gamma u_0,&\quad\gamma>0
    \end{cases}
\end{equation}
for $u_0\in H^s_x$ and $\mu=\pm$. Observe next that, if there was a solution to~\eqref{E:NLS_main} given by an iterative method, then the solution map would be smooth and in particular one could find $C_T>0$ such that for all $u_0\in H^s$
\begin{equation}\label{E:derivative_unif_bound}
    \Big\|\partial_\gamma^m u(\gamma,t,x){\big|_{\gamma=0}}\Big\|_{H^s}\leq C_T\|u_0\|_{H^s}^{m}\qquad \forall\;t\in[-T,T]
\end{equation}
for any $m\geq 1$. However, we will show in this section that~\eqref{E:derivative_unif_bound} fails with $m=2k+1$ for certain initial data. As a result, it will not be possible to obtain local well-posedness for~\eqref{E:NLS_main} by a fixed-point argument.

\subsection{Map derivatives}
We start by computing the derivatives involved in~\eqref{E:derivative_unif_bound} {at the zero initial condition}, see also~\cite[Section 3.8]{Tao06} for the particular case $k=1$. Note that the results are independent of the underlying space and thus hold anywhere the operators and spaces involved can be properly defined. 
Without loss of generality and for simplicity, we set $\mu=1$ in~\eqref{E:NLS_main_param}.

\begin{lemma}\label{L:map_derivatives}
Let $k\geq 1$, $T>0$ and $u(\gamma,t,x)$ be a solution to~\eqref{E:NLS_main_param} with $t\in[-T,T]$. For any $u_0\in H^s$ it holds that
\begin{equation*}
    \partial^m_\gamma u(\gamma,t,x){\big|_{\gamma=0}}
    =\begin{cases}
        0&\text{if}\quad m=0\\
        S_tu_0(x)&\text{if}\quad m=1,\\
        0&\text{if}\quad1<m<2k+1,\\
        \displaystyle (2k+1)!i\int_0^t S_{t-\tau}|S_\tau u_0(x)|^{2k}S_\tau u_0(x)\,d\tau&\text{if}\quad m=2k+1.
    \end{cases}
\end{equation*}
\end{lemma}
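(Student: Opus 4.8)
The plan is to differentiate the Duhamel formula~\eqref{E:Duhamel} (with the parameter $\gamma$ inserted in the initial condition) repeatedly in $\gamma$ and evaluate at $\gamma=0$. Write $u=u(\gamma,t,x)$ for the solution of~\eqref{E:NLS_main_param}; by Duhamel,
\[
u(\gamma,t,x)=\gamma\,S_t u_0(x)+i\int_0^t S_{t-\tau}\bigl(|u|^{2k}u\bigr)(\gamma,\tau,x)\,d\tau.
\]
Since the nonlinearity $N(z)=|z|^{2k}z=z^{k+1}\bar z^{k}$ vanishes to order $2k+1$ at $z=0$, and $u(\gamma,t,x)$ vanishes to first order in $\gamma$ at $\gamma=0$ (as the $\gamma=0$ case gives $u\equiv 0$), the composition $N(u)$ vanishes to order $2k+1$ in $\gamma$. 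This immediately forces $\partial_\gamma^m u|_{\gamma=0}=0$ for all $0\le m\le 2k$ except $m=1$, where the only surviving contribution is the linear term $\partial_\gamma(\gamma S_tu_0)|_{\gamma=0}=S_tu_0$.

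First I would establish the $m=0$ and $m=1$ cases directly: at $\gamma=0$ uniqueness of solutions forces $u\equiv 0$, giving $m=0$; differentiating Duhamel once and using that the integral term is $O(\gamma^{2k+1})$ gives $\partial_\gamma u|_{\gamma=0}=S_tu_0$. Then for $1<m<2k+1$ I would argue inductively: assuming $\partial_\gamma^\ell u|_{\gamma=0}=0$ for $1<\ell<m$ and $\partial_\gamma u|_{\gamma=0}=S_tu_0$, apply the Fa\`a di Bruno / multilinear chain rule to $\partial_\gamma^m\bigl(|u|^{2k}u\bigr)|_{\gamma=0}$. Every term in that expansion is a product of $2k+1$ factors, each a $\gamma$-derivative $\partial_\gamma^{\ell_i}u|_{\gamma=0}$ of strictly positive order with $\sum_i\ell_i=m<2k+1$; by pigeonhole at least one $\ell_i$ satisfies $0<\ell_i<m$, hence (for $m\le 2k$) is a vanishing derivative unless $\ell_i=1$, but then $\sum\ell_i=m<2k+1$ is impossible with all $\ell_i\ge 1$ unless some $\ell_i\ge 2$... so in the range $1<m<2k+1$ every product contains a factor $\partial_\gamma^{\ell_i}u|_{\gamma=0}$ with $1<\ell_i<m$, which vanishes by the inductive hypothesis, or else the multiplicities don't add up. The cleanest way to phrase this is: $N(u)$, as a function of $\gamma$, is a sum of monomials of total degree exactly $2k+1$ in the Taylor coefficients of $u$, and since $u$ has vanishing $0$th coefficient, $N(u)$ has vanishing Taylor coefficients up to order $2k$; differentiating Duhamel $m$ times for $1<m\le 2k$ then gives $\partial_\gamma^m u|_{\gamma=0}=i\int_0^t S_{t-\tau}\partial_\gamma^m N(u)|_{\gamma=0}\,d\tau=0$.

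Finally, for $m=2k+1$, the only monomial of degree $2k+1$ in the Taylor coefficients of $u$ that survives is the one built entirely from the first coefficient $\partial_\gamma u|_{\gamma=0}=S_\tau u_0$; all others contain at least one coefficient of order between $2$ and $2k$, which vanishes. Writing $N(z)=z^{k+1}\bar z^k$ and extracting the coefficient of $\gamma^{2k+1}$ in $N\bigl(\gamma S_\tau u_0+O(\gamma^{2})\bigr)$ gives $(S_\tau u_0)^{k+1}\overline{(S_\tau u_0)}^{k}=|S_\tau u_0|^{2k}S_\tau u_0$; hence $\partial_\gamma^{2k+1}N(u)|_{\gamma=0}=(2k+1)!\,|S_\tau u_0|^{2k}S_\tau u_0$. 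Differentiating Duhamel $2k+1$ times at $\gamma=0$, the linear term $\gamma S_tu_0$ contributes nothing (its $(2k+1)$th derivative is zero for $2k+1\ge 2$), and the integral term yields
\[
\partial_\gamma^{2k+1}u(\gamma,t,x)\big|_{\gamma=0}=(2k+1)!\,i\int_0^t S_{t-\tau}\bigl(|S_\tau u_0(x)|^{2k}S_\tau u_0(x)\bigr)\,d\tau,
\]
as claimed. The main obstacle is purely bookkeeping: justifying that one may differentiate the Duhamel integral in $\gamma$ under the integral sign and interchange $\partial_\gamma$ with the (linear, bounded) propagator $S_{t-\tau}$, and correctly tracking the combinatorial factor $(2k+1)!$ from the multinomial expansion of $|u|^{2k}u$. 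Since by assumption the solution map is smooth (this is precisely the hypothesis under which~\eqref{E:derivative_unif_bound} is being tested), the regularity needed for these interchanges is available, and the real content of the lemma is the vanishing-order combinatorics described above.
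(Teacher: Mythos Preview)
Your proposal is correct and takes essentially the same approach as the paper: differentiate the Duhamel formula, use the multinomial (generalized Leibniz) expansion of $\partial_\gamma^m(|u|^{2k}u)$, and observe that every term with some $\alpha_i=0$ vanishes because $u|_{\gamma=0}\equiv 0$. Your initial inductive phrasing is a bit tangled (the $\ell_i$ in the Leibniz expansion need not be strictly positive), but the ``cleanest way'' you then describe---$N(u)$ has vanishing Taylor coefficients in $\gamma$ up to order $2k$ simply because $u(0)=0$---is exactly the paper's argument, and for $m=2k{+}1$ the only surviving multi-index is $(1,\dots,1)$, giving the $(2k{+}1)!$ factor; no appeal to the vanishing of intermediate derivatives is needed there.
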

\begin{proof}
Note first that the solution to~\eqref{E:NLS_main_param} with zero initial data is
\begin{equation}\label{E:u_gamma_at_zero}
u(0,t,x)=0=\overline{u}(0,t,x)\qquad \forall\,t\in[-T,T]
\end{equation} 
and {hence}
\begin{multline*}
    \partial_\gamma\big[|u(\gamma,\tau,x)|^{2k}u(\gamma,\tau,x)\big]{\big|_{\gamma=0}}\\
    =\partial_\gamma\big[|u(\gamma,\tau,x)|^{2k}\big]_{\big|_{\gamma=0}}\!\!\!\!\!{u(0,t,x)}
    +\partial_\gamma [u(0,\tau,x)]_{\big|_{\gamma=0}}\!\!\!\!\!|u(0,\tau,x)|^{2k}=0.
\end{multline*}
{Therefore, } Duhamel's formula yields
\begin{equation*}
    u(\gamma,t,x)=\gamma S_t u_0(x)-i\int_0^tS_{t-\tau}\big(|u(\gamma,\tau,x)|^{2k}u(\gamma,\tau,x)\big)\,d\tau
\end{equation*}
{and thus}
\begin{equation}\label{E:partial_u_gamma}
    \begin{aligned}
    \partial_\gamma u(\gamma,t,x){\big|_{\gamma=0}}
    &=S_tu_0(x)-i\int_0^t S_{t-\tau}\big(\partial_\gamma\big[|u(\gamma,\tau,x)|^{2k}u(\gamma,\tau,x)\big]{\big|_{\gamma=0}}\big)\,d\tau\\
    &=S_tu_0(x).
    \end{aligned}
\end{equation}

Let now $1< m\leq 2k+1$ and set 
\begin{equation*}
    u_i:=u_i(\gamma,\tau,x)\begin{cases}
    u(\gamma,\tau,x)&\text{for}\quad 1\leq i\leq k+1,\\ 
    \overline{u}(\gamma,\tau,x)&\text{for}\quad k+2\leq i\leq 2k+1,
    \end{cases}
\end{equation*}
so that $|u|^{2k}u=\prod_{i=1}^{2k+1}u_i$. The product rule thus reads
\begin{multline*}
    \partial^m_\gamma(|u|^{2k}u)
    =\partial^m_\gamma(u^{k+1}\overline{u}^k)\\
    =\partial^m_\gamma\Big(\prod_{i=1}^{2k+1}u_i\Big)
    =\sum_{\alpha_1+\cdots+\alpha_{2k+1}=m}\binom{m}{\alpha_1,\ldots,\alpha_{2k+1}}\prod_{i=1}^{2k+1}\partial_\gamma^{\alpha_i}u_i,
\end{multline*}
where $\binom{m}{\alpha_1,\ldots,\alpha_{2k+1}}$ denotes the multinomial coefficient $\frac{m!}{\alpha_1!\alpha_2!\cdots\alpha_{2k+1}!}$.

\medskip

Observe now that whenever $1\leq m<2k+1$, then $\alpha_i=0$ for at least one $1\leq i\leq 2k+1$ in all terms of the summation above. Let $i_0$ denote that index. In view of~\eqref{E:u_gamma_at_zero} it follows that
\begin{equation*}
    \Big[\prod_{i=1}^{2k+1}\partial_\gamma^{\alpha_i}u_i\Big]{\big|_{\gamma=0}}=u_{i_0}(0,\tau,x)\prod_{i\neq i+0}^{2k+1}\partial_\gamma^{\alpha_i}{u_i}{\big|_{\gamma=0}}=0.
\end{equation*}
When $m=2k+1$, there is one non-zero term in the summation which corresponds to $\alpha_1=\ldots=\alpha_{2k+1}$. Combined with~\eqref{E:partial_u_gamma} we find that
\begin{equation*}
    \partial^m_\gamma(|u|^{2k}u)(\gamma,\tau,x)_{\big|_{\gamma=0}}=
    \begin{cases}
    0&\text{if}\quad 1<m<2k+1,\\
    (2k+1)!S_tu_0&\text{if}\quad m=2k+1
    \end{cases}
\end{equation*}
and conclude that
\begin{align*}
    \partial_\gamma^m u(\gamma,t,x){\big|_{\gamma=0}}
    &=\int_0^t S_{t-\tau}\big(\partial^m_\gamma\big[|u(\gamma,\tau,x)|^{2k}u(\gamma,\tau,x)\big]_{\big|_{\gamma=0}}\big)\,d\tau\\
    &=\begin{cases}
    0&\text{if}\quad 1<m<2k+1,\\
    \displaystyle (2k+1)!i\int_0^t S_{t-\tau}|S_\tau u_0(x)|^{2k}S_\tau u_0(x)\,d\tau&\text{if}\quad m=2k+1
    \end{cases}
    \end{align*}
as claimed.
\end{proof}

\subsection{\texorpdfstring{$C^{2k+1}$}{C2k}-ill-posedness}

We now show that~\eqref{E:derivative_unif_bound} fails for any initial data in the set of localized eigenfunctions constructed in Proposition~\ref{P:existence_localized}.
To begin with, we prove the following key estimate satisfied by any localized eigenfunction $\psi\in E_{\rm loc}$.
\begin{lemma}\label{L:Duhamel_lower_bound}
    Let $0<s<\frac{d_S}{2}$ and $k\geq 1$. For any $\psi\in E_{\rm loc}$ and $t>0$,
    \begin{equation*}
        \bigg\|\int_0^tS_{t-\tau}|S_\tau\psi(x)|^{2k}S_\tau\psi(x)\,d\tau\bigg\|_{\dot{H}^s}
        \geq C_{d_S,k} |t|\|\psi\|_{\dot{H}^s}^{2k+1}\lambda_\psi^{k(\frac{d_s}{2}-s)}.
    \end{equation*}
\end{lemma}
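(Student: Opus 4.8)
\textbf{Proof plan for Lemma~\ref{L:Duhamel_lower_bound}.}
The plan is to exploit that $\psi$ is an eigenfunction, so the free propagator acts on it by a scalar phase: $S_\tau\psi = e^{-i\tau\lambda_\psi}\psi$. First I would substitute this into the Duhamel integrand. Since $|e^{-i\tau\lambda_\psi}|=1$, we get $|S_\tau\psi(x)|^{2k}S_\tau\psi(x) = e^{-i\tau\lambda_\psi}|\psi(x)|^{2k}\psi(x)$, so the spatial function $|\psi|^{2k}\psi$ factors out of the $\tau$-integral entirely and only a scalar time-integral $\int_0^t e^{-i\tau\lambda_\psi} S_{t-\tau}(\cdot)\,d\tau$ remains. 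The remaining obstacle is that $S_{t-\tau}$ does not act as a scalar on $|\psi|^{2k}\psi$, because this function is not an eigenfunction — it is supported on $\supp\psi$ but spread across many eigenspaces.

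The key idea to handle this is a projection/positivity trick: to bound the $\dot H^s$ norm of the integral from below, it suffices to bound the size of a single Fourier coefficient, namely the one against $\psi$ itself. Projecting the Duhamel term onto $\psi$, the operator $S_{t-\tau}$ now does act as the scalar $e^{-i(t-\tau)\lambda_\psi}$ on that component, and the two phases combine: $e^{-i(t-\tau)\lambda_\psi}e^{-i\tau\lambda_\psi} = e^{-it\lambda_\psi}$, which is independent of $\tau$. Hence
\[
\Big\langle \int_0^t S_{t-\tau}|S_\tau\psi|^{2k}S_\tau\psi\,d\tau,\ \psi\Big\rangle = e^{-it\lambda_\psi}\, t\,\langle |\psi|^{2k}\psi,\psi\rangle = e^{-it\lambda_\psi}\, t\,\|\psi\|_{L^{2k+2}}^{2k+2}.
\]
Taking absolute values, this equals $|t|\,\|\psi\|_{L^{2k+2}}^{2k+2}$. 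Since $\psi$ is an eigenfunction with eigenvalue $\lambda_\psi$, we have $\|\psi\|_{\dot H^s}^2 = (1+\lambda_\psi)^s \ge \lambda_\psi^s$, and conversely $|\langle F,\psi\rangle|^2 (1+\lambda_\psi)^s \le \|F\|_{\dot H^s}^2$ for any $F$; so
\[
\Big\|\int_0^t S_{t-\tau}|S_\tau\psi|^{2k}S_\tau\psi\,d\tau\Big\|_{\dot H^s} \ge (1+\lambda_\psi)^{s/2}\,|t|\,\|\psi\|_{L^{2k+2}}^{2k+2} \ge \lambda_\psi^{s/2}\,|t|\,\|\psi\|_{L^{2k+2}}^{2k+2}.
\]

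It remains to convert $\|\psi\|_{L^{2k+2}}^{2k+2}$ and the factor $\lambda_\psi^{s/2}$ into the claimed $\|\psi\|_{\dot H^s}^{2k+1}\lambda_\psi^{k(\frac{d_S}{2}-s)}$. Here I would invoke Theorem~\ref{T:Sobolev_saturates} with $q=2k+2>2$: it gives $\|\psi\|_{L^{2k+2}} \simeq_{d_S,k} \lambda_\psi^{\sigma_{2k+2}/2}$ with $\sigma_{2k+2} = \frac{d_S}{2}\big(1-\frac{1}{k+1}\big) = \frac{d_S}{2}\cdot\frac{k}{k+1}$, so $\|\psi\|_{L^{2k+2}}^{2k+2} \simeq \lambda_\psi^{(k+1)\sigma_{2k+2}} = \lambda_\psi^{\frac{d_S}{2}\cdot k}$. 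On the other hand, since $\|\psi\|_{L^2}=1$ and $\psi$ is an eigenfunction, $\|\psi\|_{\dot H^s} = (1+\lambda_\psi)^{s/2} \simeq \lambda_\psi^{s/2}$, hence $\|\psi\|_{\dot H^s}^{2k+1} \simeq \lambda_\psi^{s(2k+1)/2}$. Assembling the exponents of $\lambda_\psi$ on the lower bound: $\frac{s}{2} + \frac{d_S}{2}k$, and on the target: $\frac{s(2k+1)}{2} + k(\frac{d_S}{2}-s) = \frac{s}{2} + sk + \frac{d_S}{2}k - sk = \frac{s}{2} + \frac{d_S}{2}k$. These match, so the estimate closes with a constant $C_{d_S,k}$ coming from Theorem~\ref{T:Sobolev_saturates}.

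\textbf{Main obstacle.} The only genuinely nontrivial point is realizing that testing against $\psi$ is the right move — that the off-diagonal pieces of $S_{t-\tau}$ acting on $|\psi|^{2k}\psi$ carry $\tau$-dependent phases and could a priori oscillate and cancel, so one should \emph{not} try to estimate the full $\dot H^s$ norm directly, but instead lower-bound it by the one diagonal component where the phases conspire to be $\tau$-independent and the time integral produces the crucial linear-in-$t$ growth. Everything else is bookkeeping of exponents via Theorem~\ref{T:Sobolev_saturates} and the elementary fact that for an eigenfunction $\|\psi\|_{\dot H^s}\simeq\lambda_\psi^{s/2}$.
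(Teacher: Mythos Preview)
Your proposal is correct and follows essentially the same route as the paper's proof: project the Duhamel term onto the single eigenfunction $\psi$, use that on this component the two phases combine to a $\tau$-independent factor so the time integral yields $|t|\,\|\psi\|_{L^{2k+2}}^{2k+2}$, and then convert via Theorem~\ref{T:Sobolev_saturates} with $q=2(k+1)$. The paper writes out the full spectral expansion first and then drops all terms except $\varphi=\psi$, but this is exactly your ``test against $\psi$'' step.
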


\begin{proof}
    Let $\psi\in E_{\rm loc}$. In view of~\eqref{E:spectral_sol_LS}, $S_\tau\psi(x)=e^{it\lambda_{\psi}}\psi(x)$. Since $\psi$ is real-valued,
    \begin{equation}\label{E:Duhamel_fails_H01}
    \begin{aligned}
        \int_0^tS_{t-\tau}\big(|S_\tau\psi|^{2k}S_\tau\psi\big)\,d\tau
        &=\int_0^t\sum_{\varphi}e^{i\lambda_{\varphi}(t-\tau)}e^{i\tau \lambda_\psi}\langle\psi^{2k+1},\varphi\rangle\varphi(x)\,d\tau\\
        &=\sum_{\varphi}e^{it\lambda_{\varphi}}\langle\psi^{2k+1},\varphi\rangle\varphi(x)\int_0^te^{i\tau (\lambda_\psi-\lambda_\varphi)}\,d\tau\\
        &=\sum_{\varphi}\langle\psi^{2k+1},\varphi\rangle F(t,\lambda_\psi,\lambda_\varphi)\varphi(x),
    \end{aligned}
    \end{equation}
        where 
        \begin{equation}\label{E:Duhamel_fails_H02}
            F(t,\lambda_\psi,\lambda_\varphi)
            =\begin{cases}
                e^{it\lambda_{\varphi}}\Big[\frac{1}{i(\lambda_\psi-\lambda_\varphi)}e^{i\tau(\lambda_\psi-\lambda_\varphi)}\Big]_0^t=\frac{e^{it\lambda_\psi}-e^{it\lambda_\varphi}}{i(\lambda_\psi-\lambda_\varphi)}&\text{if }\lambda_\psi\neq\lambda_\varphi,\\
            te^{it\lambda_{\varphi}}&\text{if }\lambda_\psi=\lambda_\varphi.
            \end{cases}
        \end{equation}
    By Parseval, the latter implies that
    \begin{equation}\label{E:Duhamel_Hs_02}
        \bigg\|\int_0^tS_{t-\tau}\big(|S_\tau\psi|^{2k}S_\tau\psi\big)\,d\tau\bigg\|_{\dot{H}^s_x}^2
        =\sum_{\varphi}(1+\lambda_{\varphi})^s|\langle\psi^{2k+1},\varphi\rangle|^2|F(t,\lambda_\varphi,\lambda_\psi)|^2.
    \end{equation} 
   Since
   \[
   |F(t,\lambda_\varphi,\lambda_\psi)|^2
   =\begin{cases}
    \Big|\frac{e^{it\lambda_\psi}-e^{it\lambda_\varphi}}{i(\lambda_\psi-\lambda_\varphi)}\Big|^2&\text{if }\lambda_\psi\neq\lambda_\varphi,\\
    t^2&\text{if }\lambda_\psi=\lambda_\varphi,
    \end{cases}
   \]
   it follows from~\eqref{E:Duhamel_Hs_02} and Theorem~\ref{T:Sobolev_saturates} with $q=2(k+1)$ that
    \begin{align*}
        \bigg\|\int_0^tS_{t-\tau}\big(|S_\tau\psi|^{2k}S_\tau\psi\big)\,d\tau\bigg\|_{\dot{H}^s_x}^2
        &\geq (1+\lambda_{\psi})^s|\langle\psi^{2k+1},\psi\rangle|^2t^2\\
        &=t^2(1+\lambda_{\psi})^s\|\psi\|_{L^{2(k+1)}_x}^{4(k+1)}\\
        &\geq  C_{d_S,k} t^2\lambda_\psi^{s+\frac{d_S}{2}(1-\frac{1}{k+1})2(k+1)}
        = t^2\lambda_\psi^{s+kd_S}.
    \end{align*}
    Consequently, and since $\|\psi\|_{H^s}\simeq\lambda_{\psi}^{s/2}$, see e.g. Theorem~\ref{T:Sobolev_saturates},
    \begin{equation}\label{E:Duhamel_Hs_03}
    \bigg\|\int_0^tS_{t-\tau}\big(|S_\tau\psi|^{2k}S_\tau\psi\big)\,d\tau\bigg\|_{\dot{H}^s_x}\geq C_{d_S,k} |t|\lambda_\psi^{\frac{s}{2}+k\frac{d_S}{2}}
    \simeq C_{d_S,k} |t|\|\psi\|_{\dot{H}^s_x}^{2k+1}\lambda_\psi^{k(\frac{d_S}{2}-s)}
    \end{equation}
    as we wanted to prove. 
    \end{proof}

    \begin{proof}[Proof of Theorem~\ref{T:no_cont_below_Sobolev}]
    Let us now disprove~\eqref{E:derivative_unif_bound}. Given {any } $T>0$ {and any $C_T>0$,}  we may choose $j_T\geq 2$ large enough so that
    \[
    5^{j_T k(\frac{d_S}{2}-s)}c_6^{k(\frac{d_S}{2}-s)}\geq \frac{2C_T}{C_{d_S,k}|T|},
    \]
    where $C_T>0$ is the constant in~\eqref{E:derivative_unif_bound}, $c_6>0$ is the constant in Proposition~\ref{P:existence_localized}(ii), and $C_{d_S,k}$ is the constant in~\eqref{E:Duhamel_Hs_03}. By virtue of Proposition~\ref{P:existence_localized}\ref{E:localized_evalue}, we may now take {a localized eigenfunction} $\psi_T:=\psi^{(j_T)}\in E_{\rm loc}$, {c.f.~\eqref{E:def_Eloc},} so that 
    \[
    \lambda_{\psi_T}^{k(\frac{d_S}{2}-s)}=c_6^{k(\frac{d_S}{2}-s)} 5^{j_Tk(\frac{d_S}{2}-s)}\geq \max\Big\{\frac{2C_T}{C_{d_S,k}|T|},2\Big\}.
    \]
    Substituting $\psi=\psi_T$ and $t=T$ in~\eqref{E:Duhamel_Hs_03}, it follows from Lemma~\ref{L:map_derivatives} and Lemma~\ref{L:Duhamel_lower_bound} that
    \[
        \Big\|\partial_\gamma^m u(\gamma,t,x){\big|_{\gamma=0}}\Big\|_{H^s}\geq 2C_T\|u_0\|_{{H}^s}^{2k+1}
    \]
    for $u_0=\psi_T$. 
    Note that $\|u_0\|_{{H}^s}^{2k+1}$ and $\|u_0\|_{\dot{H}^s}^{2k+1}$ are equivalent because $\lambda_{\psi_T}>1$.
    \end{proof}
{\subsection{No Strichartz estimates except for the trivial one}\label{SS:No_Strichartz_cubic}
We dedicate this subsection to the proof of Theorem~\ref{T:no_Strichartz_cubic}. The typical Strichartz estimate to address well-posedness for the cubic NLS on a finite interval $[0,T]$ is 
\begin{equation}\label{E:Strichartz_cubic}
    \|S_\tau u\|_{L^4_tL^4_x}\apprle_T \|u\|_{H^s}
\end{equation}
for a certain exponent $s$. The trivial estimate using  Sobolev embedding  is
\begin{equation*}
 \|S_\tau u\|_{L^4_tL^4_x}\leq T^{1/4}\sup_{[0,T]} \|S_\tau u\|_{L^4_x}\leq T^{1/4}\sup_{[0,T]}\|S_\tau u\|_{H^s_x}
 =T^{1/4}\|u\|_{H^s_x},
\end{equation*}
where, using Corollary \ref{C:Sobolev_embedding_b}, $s>\frac{d_S}{4}$.
In this section, we show by a direct computation that~\eqref{E:Strichartz_cubic} fails on SG for any $s< \frac{d_S}{4}$, hence proving the theorem.
\medskip
Once again, the localized eigenfunctions are responsible for this phenomenon and its proof will use computations from Lemma~\ref{L:Duhamel_lower_bound} as well as Theorem~\ref{T:Sobolev_saturates}. 
We note that,  while the result is stated and proved only for the $L^4$ norm, which is associated to  the cubic NLS, the arguments generalize to any $L^{2k+2}$ norm associated to a $2k+1$ nonlinearity, we do not report the details of this computation here.
\begin{proof}
    Note first that, for any $\tau>0$ and $u,v\in L^2$ it holds that
    \begin{equation}\label{E:no_Strichartz_H01}
        \langle u, \overline{S_\tau v}\rangle=\langle S_{-\tau}u,v\rangle.
    \end{equation}
    Let now $\psi\in E_{\rm loc}$ and $T>0$. Applying~\eqref{E:no_Strichartz_H01} with $u=|S_t \psi|^2S_t\psi$, $v=S_T\psi$ and $\tau=t-T$ one writes
    \begin{align}
            \|S_t\psi\|_{L^4_tL^4_x([0,T]\times\SG)}^4
            &=\int_{\SG}\int_0^T|S_t\psi|^4dt\,d\mu
            =\int_{\SG}\int_0^T\big(|S_t\psi|^2S_t\psi\big)\overline{S_t\psi}\,dt\,d\mu\notag\\
            &=\int_0^T\langle |S_t\psi|^2S_t\psi,\overline{S_t\psi}\rangle\,dt
            =\int_0^T\langle |S_t\psi|^2S_t\psi,\overline{S_{t-T}S_T\psi}\rangle\,dt\notag\\
            &=\int_0^T\langle S_{T-t}\big( |S_t\psi|^2S_t\psi\big),S_T\psi\rangle\,dt\label{E:3rd_derivative}\\
            &=\int_{\SG}S_T\psi\int_0^TS_{T-t}\big(|S_t\psi|^2S_t\psi\big)\,dt\,d\mu.\notag
    \end{align}
    In view of~\eqref{E:Duhamel_fails_H01} and~\eqref{E:Duhamel_fails_H02}, it follows from the latter expression that 
    \begin{equation*}
        \begin{aligned}
        \|S_t\psi\|_{L^4_tL^4_x([0,T]\times\SG)}^4
        &=\sum_{\varphi}\langle\psi^3,\varphi\rangle F(T,\lambda_\psi,\lambda_\varphi)\langle \varphi,S_T\psi\rangle\\ 
        &=\langle\psi^3,\psi\rangle F(T,\lambda_\psi,\lambda_\psi)e^{-iT\lambda_\psi}\\
        &=\|\psi\|_{L^4}^4T e^{iT\lambda_\psi}e^{-iT\lambda_\psi}=T\|\psi\|_{L^4}^4.
        \end{aligned}
    \end{equation*}
    Finally, note that $\sigma_q$ from Theorem~\ref{T:Sobolev_saturates} with $q=4$ equals $\frac{d_S}{2}(1-\frac{1}{2})=\frac{d_S}{4}$ and therefore~\eqref{E:no_Strichartz_cubic} follows.
\end{proof}}
\subsection{Connecting both results}
In the previous two sections we have shown directly and independently the non-existence of a $C^{3}$ flow map and the absence of Strichartz estimates for cubic NLS. In this section we establish the connection between these two results. Once again and for simplicity we only discuss the cubic case and leave the generic case to the interested reader.

\medskip

The result follows the lines of~\cite[Proposition 10]{GG10} along with a suitable replacement for dyadic intervals that is based on~\cite{Str05}, see also~\cite[Section 2.3]{AR22}.

\begin{proposition}\label{P:Prop10_GG10}
    If the cubic NLS
    \begin{equation}\label{E:NLS_SG}
        \begin{cases}
        i\partial_t u+\Delta_{\SG} u= |u|^{2k}u&\quad (t,x)\in \mathbb{R}\times\SG,\\
        u(0,\cdot)=u_0\in H^s,&
        \end{cases}
    \end{equation}
    is strongly well-posed on $H^{s}$ for some $s\geq 0$, then
    \begin{equation*}
    \|S_t u\|_{L^4_tL^4_x}\apprle_T \|u\|_{H^{r/2}}
    \end{equation*}
    for $t\in[0,T]$ and any $r> s$.
\end{proposition}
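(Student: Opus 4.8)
The plan is to turn strong well-posedness into a single-function space--time bound for the free propagator and then sum it over spectral blocks; we treat only the cubic case $k=1$ (so $2k+1=3$), following the scheme of~\cite[Proposition~10]{GG10}. By Definition~\ref{D:well-p}(iii), strong well-posedness in $H^{s}$ makes the solution map $\Phi$ smooth on a neighbourhood of $0$, into $C([-T_{0},T_{0}],H^{s})$ for some fixed $T_{0}>0$, so $D^{3}\Phi(0)$ is a bounded trilinear map there. Differentiating three times in $\gamma$ at $\gamma=0$ along the data $\gamma g$, Lemma~\ref{L:map_derivatives} (with $k=1$, $m=3$) identifies $D^{3}\Phi(0)[g,g,g]$ with $6i\int_{0}^{t}S_{t-\tau}|S_{\tau}g|^{2}S_{\tau}g\,d\tau$, whence its boundedness gives the Duhamel bound
\begin{equation*}
    \sup_{t\in[0,T_{0}]}\Big\|\int_{0}^{t}S_{t-\tau}\big(|S_{\tau}g|^{2}S_{\tau}g\big)\,d\tau\Big\|_{H^{s}}\apprle\|g\|_{H^{s}}^{3},\qquad g\in H^{s}.
\end{equation*}

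Next I would convert this into an $L^{4}$ bound. Expanding $\|S_{t}g\|_{L^{4}_{t}L^{4}_{x}([0,T_{0}]\times\SG)}^{4}=\int_{0}^{T_{0}}\!\int_{\SG}|S_{t}g|^{4}$ and pushing the factor $\overline{S_{t}g}$ through the propagators by the adjoint relation for the unitary group $\{S_{\tau}\}$, exactly as in the derivation of~\eqref{E:3rd_derivative}, one obtains
\begin{equation*}
    \|S_{t}g\|_{L^{4}_{t}L^{4}_{x}([0,T_{0}]\times\SG)}^{4}=\Big\langle\int_{0}^{T_{0}}S_{T_{0}-\tau}\big(|S_{\tau}g|^{2}S_{\tau}g\big)\,d\tau,\ S_{T_{0}}g\Big\rangle.
\end{equation*}
Since $S_{\tau}$ is an isometry of every $H^{\sigma}$ (Remark~\ref{R:Isometry}), the $H^{s}$--$H^{-s}$ duality and the previous display yield $\|S_{t}g\|_{L^{4}_{t,x}([0,T_{0}])}^{4}\apprle_{T_{0}}\|g\|_{H^{s}}^{3}\|g\|_{H^{-s}}$ for every $g\in H^{s}$. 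Tiling $[0,T]$ by translates of $[0,T_{0}]$ and using again that $S_{\tau}$ is an $H^{\sigma}$-isometry, it then suffices to prove the claimed estimate on $[0,T_{0}]$, at the cost of a $T$-dependent constant.

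Finally I would decompose $u=\sum_{m}P_{m}u$ into the spectral blocks $P_{m}$ of $-\Delta$ attached to dyadic scales $\Lambda_{m}$---the analogue on $\SG$ of Littlewood--Paley projections, cf.~\cite{Str05} and~\cite[Section~2.3]{AR22}---so that $\|v\|_{H^{\sigma}}^{2}\simeq\sum_{m}(1+\Lambda_{m})^{\sigma}\|P_{m}v\|_{L^{2}}^{2}$. Applying the previous bound to $g=P_{m}u$, for which $\|P_{m}u\|_{H^{s}}\simeq\Lambda_{m}^{s/2}\|P_{m}u\|_{L^{2}}$ and $\|P_{m}u\|_{H^{-s}}\simeq\Lambda_{m}^{-s/2}\|P_{m}u\|_{L^{2}}$, gives $\|S_{t}P_{m}u\|_{L^{4}_{t,x}([0,T_{0}])}\apprle\Lambda_{m}^{s/4}\|P_{m}u\|_{L^{2}}$. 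The triangle inequality in $L^{4}$ followed by Cauchy--Schwarz in $m$ (inserting the weights $\Lambda_{m}^{\pm\varepsilon/2}$) then yields, for any $\varepsilon>0$,
\begin{equation*}
    \|S_{t}u\|_{L^{4}_{t,x}([0,T_{0}])}\le\sum_{m}\|S_{t}P_{m}u\|_{L^{4}_{t,x}([0,T_{0}])}\apprle\sum_{m}\Lambda_{m}^{s/4}\|P_{m}u\|_{L^{2}}\apprle_{\varepsilon}\Big(\sum_{m}\Lambda_{m}^{-\varepsilon}\Big)^{1/2}\|u\|_{H^{(s+2\varepsilon)/2}},
\end{equation*}
the last series converging geometrically. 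Taking $r=s+2\varepsilon>s$ and undoing the reduction to $[0,T_{0}]$ completes the argument.

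I expect the delicate point to be this final summation: the gain $\Lambda_{m}^{-s/2}$ provided by the dual factor $\|g\|_{H^{-s}}$ is entirely spent reaching the exponent $s/2$, and the series over blocks only converges after a Cauchy--Schwarz that costs a further $\Lambda_{m}^{-\varepsilon}$---this is exactly what forces $r>s$ rather than $r=s$ and makes the implied constant blow up as $\varepsilon\to0$. A secondary point is that classical Littlewood--Paley theory is unavailable on $\SG$, so the block decomposition and the norm equivalence used above have to be extracted from the heat-kernel-based constructions recalled in~\cite{Str05,AR22}; this is the ``replacement for dyadic intervals'' alluded to in the statement.
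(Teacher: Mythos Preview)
Your proposal is correct and follows essentially the same route as the paper's proof: the identity~\eqref{E:3rd_derivative} combined with the $H^{s}$--$H^{-s}$ duality and the $C^{3}$-bound from well-posedness yields $\|S_{t}g\|_{L^{4}_{t,x}}^{4}\apprle\|g\|_{H^{s}}^{3}\|g\|_{H^{-s}}$, and the spectral-block decomposition with a Cauchy--Schwarz in $m$ produces the $\varepsilon$-loss. The only cosmetic difference is that the paper first isolates the single-eigenspace case before passing to blocks, whereas you go directly to blocks, and you add the tiling step from $[0,T_{0}]$ to $[0,T]$, which the paper omits.
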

\begin{proof}
    Let $u_0\in H^{s}$ and $[0,T]$ be the interval in which~\eqref{E:NLS_SG} is strongly well-posed. First, note that the equality obtained in~\eqref{E:3rd_derivative} together with the expression of the third map derivative in Lemma~\ref{L:map_derivatives} imply
    \begin{equation*}
        \begin{aligned}
        \|S_tu_0\|_{L^4_tL^4_x([0,T]\times\SG)}^4
        &=\int_0^T\langle S_{T-t}\big( |S_tu_0|^2S_t\psi\big),S_Tu_0\rangle\,dt\\
        &= \langle -\frac{1}{6i}\partial_\gamma^3u(\gamma,t,x){\big|_{\gamma=0}},S_Tu_0\rangle\\
        &=\|S_Tu_0\|_{H^{-s}}\langle -\frac{1}{6i}\partial_\gamma^3u(\gamma,t,x){\big|_{\gamma=0}},\frac{S_Tu_0}{\|S_Tu_0\|_{H^{-s}}}\rangle\\
        &\leq \|S_Tu_0\|_{H^{-s}}\Big\|\partial_\gamma^3u(\gamma,t,x){\big|_{\gamma=0}}\Big\|_{H^{s}}\\
        &=\|u_0\|_{H^{-s}}\Big\|\partial_\gamma^3u(\gamma,t,x){\big|_{\gamma=0}}\Big\|_{H^{s}},
        \end{aligned}
    \end{equation*}
    where in the last line we use the fact that $S_T$ is an isometry in $H^{-s}$, c.f. Remark~\ref{R:Isometry}. 
    Thus, the strong regularity assumption yields the bound
    \begin{equation}\label{E:L4_vs_Hs_if_C3}
        \|S_tu_0\|_{L^4_tL^4_x([0,T]\times\SG)}^4\apprle_T \|u_0\|_{H^{-s}}\|u_0\|_{H^{s}}^3,
    \end{equation}
    see~\eqref{E:derivative_unif_bound}. 

    \medskip

    Let us now assume that $u_0$ belongs to a single eigenspace $E_\lambda$ for some eigenvalue $\lambda>0$. Then, in view of~\eqref{E:def_seminorm_Hs}, 
    \begin{equation}\label{E:Hs_eigenspace}
        \|u_0\|_{H^{s}}^2\simeq \sum_{k\geq 1}(1+\lambda_k)^{s}|\langle u_0,\varphi_k\rangle|^2=(1+\lambda)^{s}\|u_0\|_{L^2}^2
    \end{equation}
    and analogously $\|u_0\|_{H^{-s}}^2\simeq (1+\lambda)^{-s}\|u_0\|_{L^2}$. Therefore, it follows from~\eqref{E:L4_vs_Hs_if_C3} that
    \begin{equation}\label{E:L4_vs_Hs_if_C3_eigensp}
        \begin{aligned}
        \|S_tu_0\|_{L^4_tL^4_x([0,T]\times\SG)}
        &\apprle_T (1+\lambda)^{\frac{3s}{8}}\|u_0\|_{L^2}^{1/2}(1+\lambda)^{-\frac{s}{8}}\|u_0\|_{L^2}^{1/2}\\
        &=(1+\lambda)^{\frac{s}{4}}\|u_0\|_{L^2}\simeq_T \|u_0\|_{H^{s/2}} 
        \end{aligned}
    \end{equation}
    and the same estimate follows with $\|u_0\|_{H^{r/2}}$ on the right-hand side for any $r>s$.

    \medskip

    Finally, suppose that $u_0$ is a generic function in $H^{s}$. This function can be decomposed into ``dyadic blocks'' as follows: For each $j\geq 1$, let $N_j:=\frac{1}{2}(3^{j+1}-3)$ and 
    \begin{equation*}
    \pi_ju:=\sum_{k=N_{j-1}+1}^{N_j}\langle u,\varphi_k\rangle\varphi_k.
    \end{equation*}
    It is known, see e.g.~\cite{Str05} or~\cite[Section 2.2]{AR22}, that $\lambda_k\simeq \Lambda_j$ for all $N_{j-1}<k\leq N_j$ with $\Lambda_j$ from Proposition~\ref{P:existence_localized}(ii). Hence, as in~\eqref{E:Hs_eigenspace} and by orthogonality we obtain
    \begin{equation*}
        \|\pi_ju\|_{H^{s}}^2\simeq \Lambda_j^{s}\|\pi_j u\|_{L^2}^2\qquad\text{and}\qquad \|u\|_{H^{s}}^2\simeq \sum_{j=1}^\infty \|\pi_ju\|_{H^{s}}^2
    \end{equation*}
    for any $u\in H^{s}$. 
    Applying the latter and~\eqref{E:L4_vs_Hs_if_C3_eigensp} to the initial data $u_0\in H^{s}$ yields
    \begin{equation*}
        \begin{aligned}
            \|S_tu_0\|_{L^4_tL^4_x([0,T]\times\SG)}
            &=\Big\|\sum_{j=1}^\infty S_t\pi_ju_0\Big\|_{L^4_tL^4_x([0,T]\times\SG)}\\
            &\leq \sum_{j=1}^\infty \|S_t\pi_ju_0\|_{L^4_tL^4_x([0,T]\times\SG)}\\
            &\apprle_T \sum_{j=1}^\infty \Lambda_j^{\frac{s}{4}}\|\pi_j u\|_{L^2}\\
            &=\sum_{j=1}^\infty \Lambda_j^{-\varepsilon}\Lambda_j^{\frac{s}{4}+\varepsilon}\|\pi_j u\|_{L^2}\\
            &\leq \Big(\sum_{j=1}^\infty \Lambda_j^{-\varepsilon}\Big)^{1/2}\Big(\sum_{j=1}^\infty \Lambda_j^{\frac{s}{2}+2\varepsilon}\|\pi_j u\|_{L^2}^2\Big)^{1/2}\\
            &\apprle_T \|u\|_{H^{(s/2)+\varepsilon}}
        \end{aligned}
    \end{equation*}
    for an arbitrary $\varepsilon>0$ as desired.
\end{proof}

\bibliographystyle{amsplain}
\bibliography{NLS_references}
\end{document}